\date{}
\newenvironment{proof}[1][\hspace{-1.0ex}]%
{\par\addvspace{1mm}{\it Proof\hspace{1.0ex}{#1}.} }%
{\quad$\square$\par\addvspace{1mm}}
    \newif\ifNoRemark
    \def\addtheorem#1#2#3#4{ 
    \ifthenelse{\expandafter\isundefined\csname the#2\endcsname}{\newcounter{#2}}{}
    \newenvironment{#1}[1][\global\NoRemarktrue]
     {\par\addvspace{2mm}\noindent 
       \refstepcounter{#2}{\bf #3~\csname the#2\endcsname
      \vphantom{##1}\ifNoRemark.\ \else\ (##1).\fi}\begingroup #4}%
     {\endgroup\par\addvspace{1mm}\global\NoRemarkfalse}
    \expandafter\newcommand\csname b#1\endcsname{\begin{#1}}
    \expandafter\newcommand\csname e#1\endcsname{\end{#1}}
    }
\newtheorem{theorem}{Theorem}
\newtheorem{claim}{Proposition}
\newtheorem{corollary}{Corollary}
\newcommand{\supp}{{\rm supp}}
\title{Combinatorial designs, difference sets and bent functions as
perfect colorings of graphs and multigraphs}
\author{V.N.Potapov and S.V.Avgustinovich \\
 quasigroup349@gmail.com, avgust@math.nsc.ru}
\begin{document}

\maketitle

\begin{abstract}
It is proved that  1) the indicator function of some onefold or
multifold independent set in a regular graph is a perfect coloring
if and only if the  set attain the Delsarte--Hoffman bound; 2) each
transversal in a uniform regular hypergraph is an independent set
attaining the Delsarte--Hoffman bound in the vertex adjacency
multigraph of this hypergraph; 3) combinatorial designs with
parameters
 $t$-$(v,k,\lambda)$ and similar
$q$-designs, difference sets, Hadamard matrices, and bent functions
are equivalent to perfect colorings of special graphs and
multigraphs, in particular, it is true in the cases of the Johnson
graphs $J(n,k)$ for $(k-1)$-$(v,k,\lambda)$ designs and the
Grassmann graphs $J_2(n,2)$ for bent functions.

Keywords: perfect coloring, equitable partition, transversal of
hypergraph, combinatorial design, $q$-design, difference set, bent
function, Johnson  graph, Grassmann graph, Delsarte--Hoffman bound

MSC 05C15; 05C50; 05B05

\end{abstract}

\section*{Introduction}

Let $G=(V,E)$ be a graph. A function mapping from the vertex set
$V(G)$ to a finite set $I$ of colors is called a coloring of the
graph. A coloring $f$ is called {\it perfect} if each collection  of
vertices adjacent to vertices of the same color have identical color
composition. It can be formulated as follows: for each color $i\in
I$ and each pair of vertices  $x,y\in V(G)$ we require that
$f(x)=f(y)$ follows $|f^{-1}(i)\cap S(x)|= |f^{-1}(i)\cap S(y)|$,
where $S(x)=\{z\in V(G) : \{z,x\}\in E(G)\}$ is the set of vertices
adjacent to $x$. Also the  terms 'equitable partition'\ and
'partition design'\ are used  for the set $\{f^{-1}(i) : i\in I\}$.
This set is a partition  of vertices of $G$ according to the colors
of a perfect coloring. The matrix $P=(p_{ij})$ of size $|I|\times
|I|$ is called a quotient matrix if each its entry $p_{ij}$ is equal
to the number of  vertices of color $j$  adjacent to each vertex of
color $i$.

Perfect $2$-colorings turn out to be solutions to extremal problems
on graphs. The definition of $1$-perfect code in an $r$-regular
graph imply directly that the indicator function of the code is a
perfect coloring with quotient matrix  $\begin{pmatrix}
0 & r \\
1 & r-1
\end{pmatrix}$. It is proved in \cite{FdF2} that all unbalanced Boolean functions
reaching  Fon-Der-Flaass's bound of correlation immunity are perfect
$2$-colorings. All Boolean functions attaining the
Bierbrauer--Friedman bound for orthogonal arrays are also perfect
$2$-colorings (see \cite{Pot}). We proved another similar result. In
Theorem \ref{thDH} we establish that if an onefold or multifold
independent set in a regular graph attains the Delsarte--Hoffman
bound then the indicator function of this set is a perfect
$2$-coloring.

We suggest characterizations for various well-known combinatorial
configurations in terms of perfect colorings of simple graphs,
hypergraphs and multigraphs. In Proposition \ref{exerDH} we prove
that each transversal of a uniform regular hypergraph  is an
independent set in some multigraph attaining the Delsarte--Hoffman
bound. Consequently, it is equivalent to a perfect $2$-coloring of
the multigraph. In Section 4 we show that combinatorial designs and
$q$-designs are equivalent to perfect $2$-colorings with certain
parameters of the Johnson graphs and the Grassmann graphs
respectively or multigraphs obtained from these graphs. In
particular, the Hadamard matrices turn out to be equivalent to
perfect colorings of some hypergraph. In Section 6 we prove that
strongly regular graphs and partial difference sets in abelian
groups are equivalent to perfect $2$-colorings of the line graph of
the complete graph $K_n$. At last, we prove that Boolean bent
functions are equivalent to perfect $4$-colorings of $J_2(n,2)$ with
certain quotient matrix (see Theorem \ref{avg}).

\section{Perfect Colorings of Multigraphs and Hypergraphs}

{\it Multigraph} is a generalization of the  notion of graph by
allowing multiple edges and loops. Let us enumerate vertices of a
multigraph on $n$ vertices by numbers from $1$ to $n$. A square
matrix $A=(a_{ij})$ of size $n\times n$ is called an adjacency
matrix of a multigraph if each entry $a_{ij}$ is equal to the number
of edges  joining $i$th and $j$th vertices.  Each adjacency matrix
of multigraph is symmetric; in the case of a simple graph it
consists only of the numbers $0$ and $1$, moreover, it has only
zeros at the main diagonal. The adjacency matrix of a bipartite
graph has a block structure. Eigenvalues and eigenvectors of
adjacency matrices of a multigraph are called eigenvalues and
eigenvectors of the multigraphs.

Let $G$ be a bipartite graph with parts $V$ and $U$. Consider the
multigraph $\mathcal{M}_{12}(G)$ having $V$ as the set of vertices
and vertices $v_i,v_j\in V$ are joined by $m_{ij}$ edges if there
exist  $m_{ij}$ distinct vertices of $U$ such that each of them is
adjacent to $v_i$ and $v_j$. For convenience we assume that every
vertex of $\mathcal{M}_{12}(G)$ is incident to $d$ loops where $d$
is the degree of the vertex in $G$. If all edges and loops in
$\mathcal{M}_{12}(G)$ have the same multiplicity then in all
statements of this section instead of $\mathcal{M}_{12}(G)$ we may
consider the simple graph obtained from $\mathcal{M}_{12}(G)$ by
removing  loops and multiple edges.

The adjacency matrix of the bipartite graph $G$ can be represented
as $M=\begin{pmatrix}
0 & Y \\
Y^* & 0
\end{pmatrix}$, where $Y$ is a matrix of size $k_1\times k_2$ and $k_i$
is the cardinality of the $i$th part  of $G$. Then the adjacency
matrix of $\mathcal{M}_{12}(G)$ is $YY^*$.

Some part of the following proposition for bipartite graphs is
proved in \cite{AM}.

\begin{claim}\label{eigenfun}

$1.$ The eigenvalues of  $\mathcal{M}_{12}(G)$ are nonnegative.

$2.$ The restriction of every eigenfunction of $G$ with  eigenvalue
$\theta$ to the first part of the graph is an eigenfunction of
$\mathcal{M}_{12}(G)$ with the eigenvalue $\theta^2$.

$3.$ Every eigenfunction of $\mathcal{M}_{12}(G)$ with a positive
eigenvalue $\theta$ can be extent to an eigenfunction of $G$ with
the eigenvalue $\sqrt{\theta}$.

$4.$ Every eigenfunction of $\mathcal{M}_{12}(G)$ with eigenvalue
$0$ can be extended to an eigenfunction of  $G$ with the same
eigenvalue.
\end{claim}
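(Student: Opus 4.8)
The plan is to work directly with the matrix identity $A = YY^*$ for the adjacency matrix of $\mathcal{M}_{12}(G)$ and the block form $M = \begin{pmatrix} 0 & Y \\ Y^* & 0 \end{pmatrix}$ for $G$, treating eigenfunctions as column vectors indexed by the vertex sets. Part~1 is immediate: if $Av = \theta v$ with $v \neq 0$, then $\theta \langle v, v \rangle = \langle YY^* v, v \rangle = \langle Y^* v, Y^* v \rangle \geq 0$, so $\theta \geq 0$.

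For part~2, write an eigenfunction of $G$ with eigenvalue $\theta$ as a pair $(v, u)$ with $v$ on the first part and $u$ on the second, so the eigenequation $M\binom{v}{u} = \theta \binom{v}{u}$ unpacks as $Yu = \theta v$ and $Y^* v = \theta u$. Substituting the second into the first gives $Y Y^* v = \theta Y u = \theta^2 v$, i.e. $Av = \theta^2 v$. One should also check $v \neq 0$: if $v = 0$ then $Yu = 0$ and $\theta u = Y^* v = 0$; for $\theta \neq 0$ this forces $u = 0$, contradicting that $(v,u)$ is an eigenfunction, and for $\theta = 0$ the restriction to the first part need not be nonzero — so the cleanest statement restricts the claim to $\theta \neq 0$, or one interprets "eigenfunction with eigenvalue $\theta^2$" loosely to include the zero function. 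I would note this subtlety rather than belabor it.

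For part~3, given $Av = \theta v = YY^* v$ with $\theta > 0$ and $v \neq 0$, set $u := \frac{1}{\sqrt{\theta}} Y^* v$; this is the natural guess forced by the relation $Y^* v = \theta u = \sqrt{\theta}\,\sqrt{\theta}\,u$. Then one verifies $Y^* v = \sqrt{\theta}\, u$ directly from the definition of $u$, and $Y u = \frac{1}{\sqrt{\theta}} Y Y^* v = \frac{1}{\sqrt{\theta}} \theta v = \sqrt{\theta}\, v$; hence $M \binom{v}{u} = \sqrt{\theta} \binom{v}{u}$, and since $v \neq 0$ the extension is a genuine eigenfunction. For part~4, given $YY^* v = 0$, we have $\langle Y^* v, Y^* v\rangle = \langle YY^* v, v\rangle = 0$, so $Y^* v = 0$; then extending $v$ by $0$ on the second part gives $M\binom{v}{0} = \binom{Y\cdot 0}{Y^* v} = \binom{0}{0} = 0\cdot\binom{v}{0}$, as required.

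The main (and only real) obstacle is bookkeeping around the zero eigenvalue and the possibility that a restriction vanishes, rather than any substantive difficulty: the core of all four parts is the single algebraic fact that $A = YY^*$, which makes the eigenvalues of $\mathcal{M}_{12}(G)$ the squares of the eigenvalues of $G$ with an explicit correspondence of eigenspaces via $v \mapsto (v, \tfrac{1}{\sqrt\theta} Y^* v)$ for $\theta > 0$ and $v \mapsto (v,0)$ for $\theta = 0$. I would present the argument in the order 1, 4, 2, 3 (or 1, 2, 3, 4 following the statement), keeping each step to a couple of lines, and flag explicitly in part~2 that one should assume $\theta \neq 0$ to guarantee the restriction is nonzero.
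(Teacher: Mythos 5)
Your proof is correct and follows essentially the same route as the paper: all four parts rest on the identity $A=YY^*$, with the same inner-product argument for nonnegativity and for part~4, and your extension $v\mapsto(v,\tfrac{1}{\sqrt{\theta}}Y^*v)$ in part~3 is just a rescaling of the paper's vector $(\sqrt{\theta}h,\,Y^*h)$. Your remark that in part~2 the restriction could vanish when $\theta=0$ is a legitimate small refinement that the paper's proof passes over silently.
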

\begin{proof}
1. It follows from $YY^*$ is a nonnegative semidefinite matrix for
every $Y$.

2.   Let $(h,g)$ be an eigenvector of $M$ with  eigenvalue $\theta$,
where $h$ is a vector of length $k_1$ and $g$ is a vector of length
$k_2$. Then $Yg=\theta h$ and $Y^*h=\theta g$. Therefore,
$YY^*h=\theta Yg=\theta^2h$.

3. Suppose $Y^*h=\theta h$. Consider a vector $f=(\sqrt{\theta}h,
Y^*h)$. Then the equation $Mf=\sqrt{\theta}f$ is equivalent to the
pair of equations $YY^*h={\theta}h$ and $\sqrt{\theta}Y^*h=
\sqrt{\theta}Y^*h$.

4. Suppose that $YY^*h=\bar0$. Then $(Y^*h,Y^*h)=(YY^*h,h)=0$, which
implies that $Y^*h=\bar0$. Then $Mf=\bar0$, where $f=(h,\bar 0)$.
\end{proof}

The definition of a perfect coloring is naturally generalized to
multigraphs and directed multigraphs. We mean that a vertex in a
directed multigraph is adjacent to another  vertex if there exists
an arc (directed edge) from the first vertex to the second one. The
definition of a quotient matrix for a perfect coloring of a directed
multigraph is similar to the definition of a quotient matrix for
graphs. Notice that we can consider each quotient matrix of a
perfect coloring as an adjacency matrix of some directed multigraph.
The number of vertices in this directed multigraph is equal to the
number of colors in the perfect coloring.

Given a coloring $f: V(G) \rightarrow \{1,\ldots,k\}$ of $G$. Define
the matrix $F$ of size $n\times k$ as follows: every column  $F_i$
is the indicator function of color $i$. More formally we have
equation:
\[
\chi_{f^{-1}(i)}(x)=\left\{
\begin{array}{ll}
1 & \mbox{if}\ f(x)=i,\\
0 & \mbox{if}\ f(x)\ne i.
\end{array}
\right.
\]

Notice that each row of $F$ contains only one $1$. Henceforth, we
represent a function mapping from the set of vertices by a column
vector.

In the case of simple graphs it is well known the linear algebraic
criterion for perfect colorings. Below we prove this criterion for
directed multigraphs in a similar way.

\begin{claim}\label{s:perfcol_criteria}
Let $M$ be the adjacency matrix of a directed multigraph $G$. The
function $f$ is a perfect $k$-coloring of $G$ with the quotient
matrix $S$ if and only if  $MF=FS$.
\end{claim}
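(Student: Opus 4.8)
The plan is to unwind the matrix identity $MF=FS$ into a system of scalar equations and compare it directly with the definition of a perfect coloring and its quotient matrix; this mirrors the familiar simple-graph criterion, but with the adjacency entries read as arc multiplicities.

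First I would fix a vertex $x\in V(G)$ and a color $i\in\{1,\dots,k\}$ and compute the $(x,i)$ entry of $MF$. Since the $i$th column of $F$ is the indicator $\chi_{f^{-1}(i)}$, we get
$(MF)_{xi}=\sum_{z\in V(G)}M_{xz}\,\chi_{f^{-1}(i)}(z)=\sum_{z:\,f(z)=i}M_{xz}$,
which is exactly the number of arcs from $x$ to vertices of color $i$, counted with multiplicity; denote this number by $n_i(x)$. Next I would compute $(FS)_{xi}$: because each row of $F$ has a single $1$, located in position $f(x)$, we have $(FS)_{xi}=\sum_j F_{xj}S_{ji}=S_{f(x),i}$. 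Thus $MF=FS$ is equivalent to the statement that $n_i(x)=S_{f(x),i}$ for every vertex $x$ and every color $i$.

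From this equivalence both directions follow at once. If $MF=FS$, then $n_i(x)$ depends only on $f(x)$, so $f(x)=f(y)$ forces $n_i(x)=n_i(y)$ for all $i$; that is the defining property of a perfect coloring, and $S_{ji}$ is by construction the common number of color-$i$ out-neighbours of a color-$j$ vertex, i.e. $S$ is the quotient matrix. Conversely, if $f$ is a perfect coloring with quotient matrix $S$, then by the definition of the quotient matrix $n_i(x)=S_{f(x),i}$ for all $x$ and $i$, which is precisely $MF=FS$.

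I do not anticipate a real obstacle; the only care needed is bookkeeping specific to the directed-multigraph setting: one must read $M_{xz}$ as the number of arcs from $x$ to $z$, so that the sums above count adjacencies with multiplicity in agreement with the multigraph definition of the quotient matrix, and the argument uses only out-neighbourhoods, consistent with the notion of adjacency for directed multigraphs fixed earlier. For completeness one may observe that if a color is not used by $f$, the corresponding row of $S$ is unconstrained, which does not affect the equivalence.
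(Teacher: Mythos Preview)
Your proof is correct and follows essentially the same approach as the paper: both arguments compute the $(x,i)$ entry of $MF$ as the number of color-$i$ out-neighbours of $x$ (with multiplicity) and the $(x,i)$ entry of $FS$ as $S_{f(x),i}$, then read off the equivalence. Your version is slightly more explicit in separating the two directions and in noting the directed-multigraph bookkeeping, but the underlying idea is identical.
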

\begin{proof}
Let $j=1,\dots,k$ and $v\in V(G)$. Consider entries labeled by $(v
j)$ on the both sides of the equality  $MF=FS$. $(MF)_{v j}$ is the
number of  vertices of the color $j$  adjacent  to $v$. In the
right-hand side the row $F_v$ contains $1$ in the position
corresponding to the color of $v$. This implies that $(FS)_{v j}$ is
the number of vertices of the color $j$ adjacent to $v$.
\end{proof}

The corollaries of Proposition \ref{s:perfcol_criteria} listed below
can be found in \cite{FdF1} in the case of simple graphs. Their
proofs in the case of directed multigraphs are similar.

\begin{corollary}\label{p:perf_color_properties}
Let $u$ be an eigenvector of $S$.    Then $Fu$  is an eigenvector of
$M$ with the same eigenvalue.
\end{corollary}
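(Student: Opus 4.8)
The plan is to read off the statement directly from the matrix identity $MF=FS$ that was just established in Proposition \ref{s:perfcol_criteria}. First I would take a scalar $\theta$ and a nonzero vector $u$ with $Su=\theta u$, and then compute, using only associativity of matrix multiplication,
\[
M(Fu)=(MF)u=(FS)u=F(Su)=\theta\,(Fu).
\]
This already shows that $Fu$ belongs to the $\theta$-eigenspace of $M$, so nothing is needed here beyond the criterion from the previous proposition.

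The single point that requires a genuine (though very short) argument, rather than pure symbol manipulation, is to verify that $Fu$ is actually nonzero, so that it deserves to be called an eigenvector. For this I would invoke the observation made just before Proposition \ref{s:perfcol_criteria}: every row of $F$ contains exactly one $1$, and the columns of $F$ are the indicator functions of the (nonempty) color classes. Hence the columns of $F$ have pairwise disjoint supports, $F$ has linearly independent columns, and $Fu=\bar0$ forces $u=\bar0$. Therefore $u\neq\bar0$ implies $Fu\neq\bar0$, and the proof is complete.

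I do not expect any real obstacle in this corollary; the work was already done in Proposition \ref{s:perfcol_criteria}, and the only thing one must not overlook is the nondegeneracy of $F$ needed to guarantee $Fu\neq\bar0$. If one wished to be completely pedantic, one could also note that the same computation, read backwards, is not claimed — we only assert that eigenvectors of $S$ pull back to eigenvectors of $M$, not conversely — so no extra hypotheses on $M$ or on the coloring are required.
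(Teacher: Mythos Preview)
Your argument is correct and is exactly the standard derivation from the identity $MF=FS$; the paper itself does not spell out a proof here but simply remarks that the corollary follows as in the simple-graph case from Proposition~\ref{s:perfcol_criteria}. Your extra check that $Fu\neq\bar0$ (using that the columns of $F$ are indicators of nonempty color classes, hence linearly independent) is a welcome detail that the paper leaves implicit.
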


\begin{corollary}\label{corol5}
A function $f:V(G)\rightarrow \{0,1\}$ is a perfect coloring of an
$r$-regular multigraph $G$ with  quotient matrix $\begin{pmatrix}
r-b & b \\
c & r-c
\end{pmatrix}$ if and only if $
h(x)=\left\{
\begin{array}{ll}
\frac{b}{b+c} & f(x)=0;\\
-\frac{c}{b+c} & f(x)=1.
\end{array}
\right. $ is an eigenfunction of  $G$ with the eigenvalue $r-b-c$.
\end{corollary}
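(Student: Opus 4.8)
The plan is to read the result off Proposition~\ref{s:perfcol_criteria} and Corollary~\ref{p:perf_color_properties} once the spectrum of the quotient matrix $S=\begin{pmatrix} r-b & b \\ c & r-c\end{pmatrix}$ is understood. First I would record that $S$ has constant row sums $r$, so $u_0=(1,1)$ is an eigenvector with eigenvalue $r$; since $\tr S=2r-b-c$, the other eigenvalue is $r-b-c$, and $u_1=\bigl(\tfrac{b}{b+c},-\tfrac{c}{b+c}\bigr)$ is a corresponding eigenvector — it is a scalar multiple of $(b,-c)$, which spans the kernel of $S-(r-b-c)I=\begin{pmatrix} c & b \\ c & b\end{pmatrix}$ (this uses $b+c\neq 0$, which is needed anyway for $h$ to make sense). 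By the definition of the $n\times 2$ matrix $F$ associated with $f$, the all-ones function $\mathbf{1}$ equals $Fu_0$ and the function $h$ of the statement is precisely $Fu_1$. Finally, $r$-regularity of $G$ means $M\mathbf{1}=r\mathbf{1}$, that is, $M(Fu_0)=r\,Fu_0$.

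The ``only if'' direction is then immediate: if $f$ is a perfect coloring of $G$ with quotient matrix $S$, then applying Corollary~\ref{p:perf_color_properties} to the eigenvector $u_1$ of $S$ shows that $h=Fu_1$ is an eigenfunction of $M$ (hence of $G$) with eigenvalue $r-b-c$.

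For the ``if'' direction, suppose $Mh=(r-b-c)h$. The vectors $u_0,u_1$ are linearly independent, so every coordinate vector $e_i\in\mathbb{R}^2$ can be written $e_i=\alpha_i u_0+\beta_i u_1$; then the $i$-th column of $F$ equals $Fe_i=\alpha_i\mathbf{1}+\beta_i h$, and therefore
\[
M(Fe_i)=\alpha_i\,r\,\mathbf{1}+\beta_i(r-b-c)\,h=F\bigl(\alpha_i r\,u_0+\beta_i(r-b-c)\,u_1\bigr)=F(Se_i)=(FS)e_i ,
\]
using $Se_i=\alpha_i r\,u_0+\beta_i(r-b-c)u_1$. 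Since this holds for $i=1,2$ we obtain $MF=FS$, and Proposition~\ref{s:perfcol_criteria} gives that $f$ is a perfect $2$-coloring of $G$ with quotient matrix $S$.

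I do not expect a genuine obstacle here: the whole content is the identification of $u_1$ as the eigenvector of $S$ belonging to $r-b-c$ together with the observation $h=Fu_1$. The only point to watch is the degenerate case $b+c=0$, which is implicitly excluded ($h$ is then undefined, and in any event the hypothesis that $h$ is an eigenfunction forces $f$ to be non-constant, so the two directions match up).
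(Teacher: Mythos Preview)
Your argument is correct. The paper does not actually supply a proof of this corollary: it lists Corollary~\ref{corol5} (together with Corollary~\ref{p:perf_color_properties}) as consequences of Proposition~\ref{s:perfcol_criteria}, remarks that in the simple-graph case they can be found in \cite{FdF1}, and says the multigraph case is similar. Your derivation is precisely the expected one --- compute the spectrum of $S$, recognise $h=Fu_1$, and use $MF=FS$ in both directions --- so there is nothing to contrast.
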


\begin{claim}\label{bipartity}
If $f:V\cup U \rightarrow \{1,\dots,k\}$ is a perfect coloring of a
bipartite graph     $G$  with parts $V$ and  $U$, then $f|_V$ is a
perfect coloring of
    $\mathcal{M}_{12}(G)$.
\end{claim}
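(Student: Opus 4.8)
The plan is to push everything through the linear-algebraic criterion of Proposition~\ref{s:perfcol_criteria}, using that the adjacency matrix of $\mathcal{M}_{12}(G)$ is $YY^*$. Write the adjacency matrix of $G$ in the bipartite block form $M=\begin{pmatrix}0 & Y\\ Y^* & 0\end{pmatrix}$ dictated by the parts $V$ and $U$, and split the $n\times k$ indicator matrix $F$ of $f$ conformally as $F=\begin{pmatrix}F_V\\ F_U\end{pmatrix}$, so that $F_V$ and $F_U$ are the indicator matrices of $f|_V$ and $f|_U$ respectively.

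First I would invoke Proposition~\ref{s:perfcol_criteria} for $G$: since $f$ is a perfect coloring of $G$, we have $MF=FS$ with $S$ its quotient matrix, and reading off the two blocks of this identity gives $YF_U=F_VS$ and $Y^*F_V=F_US$. Then I would simply combine them: $YY^*F_V=Y(Y^*F_V)=Y(F_US)=(YF_U)S=(F_VS)S=F_VS^2$. Since $YY^*$ is the adjacency matrix of $\mathcal{M}_{12}(G)$, the relation $YY^*F_V=F_VS^2$ is exactly the hypothesis of the converse direction of Proposition~\ref{s:perfcol_criteria} applied to $\mathcal{M}_{12}(G)$ and the coloring $f|_V$; hence $f|_V$ is a perfect coloring of $\mathcal{M}_{12}(G)$, with quotient matrix $S^2$.

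I do not anticipate a real obstacle: the argument is a two-line matrix manipulation once the criterion is in hand. The only things to keep honest are bookkeeping ones --- that the block split of $F$ matches that of $M$, and that the loop convention adopted in the definition of $\mathcal{M}_{12}(G)$ is precisely the one for which $YY^*$ (rather than $YY^*$ with an altered diagonal) is the adjacency matrix; both are already fixed in the paragraphs preceding the statement. If one insists on regarding $f|_V$ as a $k$-coloring even when some color of $f$ is missing from $V$, this causes no trouble, since neither Proposition~\ref{s:perfcol_criteria} nor its proof uses surjectivity of the coloring.
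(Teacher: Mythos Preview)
Your argument is correct and mirrors the paper's own proof: both invoke Proposition~\ref{s:perfcol_criteria}, extract the two block equations from $MF=FS$, and compose them to get $YY^*F_V=F_V(\cdot)$. The only difference is bookkeeping: the paper writes $F$ block-diagonally and $S$ block-antidiagonally (tacitly assuming each color class lies in a single part), arriving at quotient matrix $S_1S_2$, whereas your row-split of $F$ avoids that assumption and gives quotient $S^2$.
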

\begin{proof}
The adjacency matrix of $G$ can be represented as $M=\begin{pmatrix}
0 & Y \\
Y^* & 0
\end{pmatrix}$. Let $S=\begin{pmatrix}
0 & S_1 \\
S_2 & 0
\end{pmatrix}$ be a quotient matrix of the perfect coloring. Proposition
 \ref{s:perfcol_criteria} implies equality $MF=FS$ that is
 equivalent to the pair of equalities $YF_2=F_1S_1$ and $Y^*F_1=F_2S_2$, where
  $F=\begin{pmatrix}
F_1 & 0 \\
0 & F_2
\end{pmatrix}$, and matrices $F_1$ and $F_2$ correspond to
colorings of two parts of  $G$. We obtain
$YY^*F_1=YF_2S_2=F_1S_1S_2$. Observe that $YY^*$ is the adjacent
matrix of $\mathcal{M}_{12}(G)$ and  $S_1S_2$ is the quotient matrix
of the perfect coloring  $f|_V$ by Proposition
\ref{s:perfcol_criteria}.
\end{proof}

The converse is false in general. A perfect coloring of
$\mathcal{M}_{12}(G)$ do not need to be a restriction of any perfect
coloring of  $G$. In particular, we can find some examples in the
case that $\mathcal{M}_{12}(G)$ is the complete graph.

A {\it hypergraph} is another generalization of a graph. Here we
consider
 unordered collections of various numbers of vertices as  hyperedges
 of the hypergraph.
 A hypergraph is called  {\it $k$-uniform} whenever each hyperedge
 consists of $k$ vertices. A $(0,1)$-matrix $Y=(y_{ij})$ of size  $n\times
 m$, where $n$ is the number of vertices and $m$ is the number of
 hyperedges,
 is called an {\it incidence matrix} of the hypergraph if
\[
y_{ij}= \left\{
\begin{array}{l}
1 \mbox{, if}\ v_i\in e_j \in E(G);\\
0\mbox{, otherwise}.
\end{array}
\right.
\]

For any hypergraph $G$ we define the bipartite graph
$\mathcal{D}(G)$. The first part of $\mathcal{D}(G)$ consists of all
vertices of $G$ and the second one consists of all hyperedges of
$G$. A vertex $v\in V(G)$ from the first part  is adjacent to a
vertex $e\in E(G)$ from the second part whenever $v\in e$. It is
easy to see that the adjacency matrix of $\mathcal{D}(G)$ can be
represented as $\begin{pmatrix}
0 & Y \\
Y^*& 0
\end{pmatrix}$, where $Y$ is the incidence matrix of  $G$.

A coloring of  $G$ is called  {\it perfect} if it induces a perfect
coloring of  $\mathcal{D}(G)$. In other words, a coloring of a
hypergraph is perfect whenever two vertices of the same color are
incident to the equal numbers of hyperedges with any fixed color
composition of vertices.

Consider the adjacency matrix $M$ of the multigraph
$\mathcal{M}_{12}(\mathcal{D}(G))$. It is clear that the entry
$m_{ij}$ of $M$ is equal to the number of hyperedges of $G$
containing $i$th and $j$th vertices at the same time. Then $M=YY^*$,
where $Y$ is the incidence matrix of  $G$.

By Proposition \ref{bipartity} we obtain
\begin{corollary}
Each perfect coloring of the hypergraph $G$  is a perfect coloring
of $\mathcal{M}_{12}(\mathcal{D}(G))$.
\end{corollary}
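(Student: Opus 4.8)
The plan is to reduce this statement directly to Proposition~\ref{bipartity}. First I would unfold the definition just given: a perfect coloring $f$ of the hypergraph $G$ is, by definition, a coloring of $V(G)$ that extends to a perfect coloring $\hat f$ of the associated bipartite graph $\mathcal{D}(G)$, where $\hat f$ assigns to each hyperedge a color recording the color composition of its vertices (or, more loosely, any extension of $f$ to the hyperedges making $\hat f$ perfect on $\mathcal{D}(G)$). The point to keep in mind is that the first part of $\mathcal{D}(G)$ is exactly $V(G)$, and the second part is $E(G)$, with incidence matrix $Y$, so that $\hat f|_{V(G)} = f$.

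Next I would simply apply Proposition~\ref{bipartity} to the bipartite graph $\mathcal{D}(G)$ with parts $V = V(G)$, $U = E(G)$, taking $\hat f$ in the role of the perfect coloring of the bipartite graph. The proposition then yields that $\hat f|_{V(G)}$, i.e.\ $f$ itself, is a perfect coloring of $\mathcal{M}_{12}(\mathcal{D}(G))$, which is precisely the claim. There is essentially no obstacle here; the only care needed is to match the hypotheses correctly, namely that the restriction in Proposition~\ref{bipartity} is taken to the \emph{vertex} part of $\mathcal{D}(G)$ and not to the hyperedge part. If desired, one may add the remark that, since $M = YY^*$ is the adjacency matrix of $\mathcal{M}_{12}(\mathcal{D}(G))$ as observed just before the corollary, the quotient matrix of $f$ on $\mathcal{M}_{12}(\mathcal{D}(G))$ is the product $S_1 S_2$ of the two off-diagonal blocks of the quotient matrix of $\hat f$ on $\mathcal{D}(G)$, exactly as in the proof of Proposition~\ref{bipartity}.
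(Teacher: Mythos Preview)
Your proposal is correct and follows exactly the paper's approach: the paper simply states ``By Proposition~\ref{bipartity} we obtain'' and leaves the unfolding of the definition of a perfect hypergraph coloring implicit, which is precisely what you have written out. Your added remark about the quotient matrix $S_1S_2$ is also in line with the proof of Proposition~\ref{bipartity}.
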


Let $G$ be a $k$-uniform hypergraph. The multigraph $\mathcal{E}(G)$
is called a {\it line graph} of $G$ if vertices of $\mathcal{E}(G)$
are edges of $G$ and two vertices $v,u\in \mathcal{E}(G)$ are joined
by $l$ edges whenever $u\cap v$ consists of $l$ vertices of $G$.

By the definition, the adjacency matrix of  $\mathcal{E}(G)$ is
$Y^*Y-kI$, where $Y$ is the incidence matrix of $G$. If $G$ is a
simple graph then $\mathcal{E}(G)$ is also a simple graph.

A coloring $f'$ of $\mathcal{E}(G)$ is   {\it induced} by a coloring
$f$ of $G$ whenever  we use a multiset of colors of vertices from
$e\in E(G)$  as a  color of  $e\in \mathcal{E}(G)$, i.\,e.,
$f'(e)=\{f(v_1),f(v_2),\dots,f(v_k)\}$, where
$e=\{v_1,v_2,\dots,v_k\}$.

\begin{claim}\label{cl:edge_coloring}
Each coloring induced by a perfect coloring of a hypergraph $G$ is
perfect.
\end{claim}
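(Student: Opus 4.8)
The plan is to repeat the linear-algebra argument of Proposition~\ref{bipartity}, but applied to the hyperedge part of $\mathcal{D}(G)$ rather than the vertex part. First I would set up notation: let $Y$ be the incidence matrix of the $k$-uniform hypergraph $G$, so that $\mathcal{D}(G)$ has adjacency matrix $\begin{pmatrix} 0 & Y \\ Y^* & 0 \end{pmatrix}$ and $\mathcal{E}(G)$ has adjacency matrix $Y^*Y-kI$. Let $f$ be a perfect coloring of $G$, let $f'$ be the coloring it induces on $\mathcal{E}(G)$, and let $F_1$, $F_2$ be the indicator matrices of $f$ on $V(G)$ and of $f'$ on $E(G)$ respectively. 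The observation that makes everything go through is that $F_2$ is at the same time the hyperedge-indicator matrix of the coloring that $f$ induces on the second part of $\mathcal{D}(G)$: in both constructions a hyperedge is colored by the color composition (multiset of colors of its vertices), so the two colorings of $E(G)$ have literally the same color classes.

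Next, since $f$ is a perfect coloring of the hypergraph, the combined coloring is a perfect coloring of the bipartite graph $\mathcal{D}(G)$; I would write its quotient matrix in block form as $\begin{pmatrix} 0 & S_1 \\ S_2 & 0 \end{pmatrix}$. By Proposition~\ref{s:perfcol_criteria} the identity $MF=FS$ for $\mathcal{D}(G)$ is equivalent to the pair of equalities $YF_2=F_1S_1$ and $Y^*F_1=F_2S_2$. Multiplying the first on the left by $Y^*$ and then using the second gives
\[
(Y^*Y)F_2 = Y^*(YF_2) = Y^*(F_1S_1) = (Y^*F_1)S_1 = F_2S_2S_1 ,
\]
hence $(Y^*Y-kI)F_2 = F_2(S_2S_1-kI)$. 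Since $Y^*Y-kI$ is the adjacency matrix of $\mathcal{E}(G)$ and the columns of $F_2$ have disjoint supports, Proposition~\ref{s:perfcol_criteria} applied once more shows that $f'$ is a perfect coloring of $\mathcal{E}(G)$ with quotient matrix $S_2S_1-kI$.

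The computation itself is only two lines, so the real work is the bookkeeping in the first paragraph: one has to be careful that the coloring of the hyperedges appearing in the perfect coloring of $\mathcal{D}(G)$ and the coloring $f'$ of the vertices of $\mathcal{E}(G)$ are encoded by the same matrix $F_2$. By contrast, no loop bookkeeping is needed here, because $\mathcal{E}(G)$ was defined directly through the matrix $Y^*Y-kI$ rather than as an $\mathcal{M}_{12}$-type multigraph with prescribed loops.
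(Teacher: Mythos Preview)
Your argument is correct, but it is genuinely different from the paper's. The paper proves the claim by a direct combinatorial count: fix a hyperedge $e$ with a given color composition, and for each vertex $v\in e$ use the hypergraph perfect-coloring condition to know how many hyperedges of every composition contain $v$; summing over $v\in e$ gives the color profile of the neighborhood of $e$ in $\mathcal{E}(G)$, and the paper then remarks that an overlap $|e\cap e'|=s$ is counted $s$ times, matching the edge multiplicity in $\mathcal{E}(G)$.

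Your route is the linear-algebra dual of Proposition~\ref{bipartity}: instead of projecting the perfect coloring of $\mathcal{D}(G)$ onto its vertex part to get a coloring of $\mathcal{M}_{12}(\mathcal{D}(G))$, you project onto the hyperedge part and land on $\mathcal{E}(G)$. The key step you correctly identify is that the hyperedge coloring appearing in the bipartite perfect coloring of $\mathcal{D}(G)$ and the induced coloring $f'$ on $\mathcal{E}(G)$ share the same indicator matrix $F_2$; after that, the two-line computation $(Y^*Y)F_2=F_2S_2S_1$ and subtraction of $kI$ finish it. One bonus of your approach is that it hands you the quotient matrix $S_2S_1-kI$ of $f'$ explicitly and makes the parallel with Proposition~\ref{bipartity} (quotient $S_1S_2$ on $YY^*$) transparent; the paper's combinatorial argument does not write this matrix down. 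Conversely, the paper's count makes the role of multiple intersections visible and does not really use $k$-uniformity except through the definition of $\mathcal{E}(G)$, whereas your argument needs the uniform loop count $k$ to pass from $Y^*Y$ to the stated adjacency matrix.
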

\begin{proof}
Let $f$ be a perfect $t$-coloring of $G$. Consider some hyperedge
$e$ of the color  $\{i_1,\dots,i_t\}$, i.\,e., $e$ contains  $i_j$
vertices of color  $j$.  By the definition of the  perfect coloring
of hypergraphs, each vertex of the color $j$ belongs to a known
number of hyperedges of each fixed color composition. The multiset
of such color compositions of all vertices in $e$ is the same for
hyperedges with the same color. Then the collection  of vertices
adjacent to vertices of $\mathcal{E}(G)$ with the same color have
identical color composition. It is possible that some hyperedge $e'$
intersects $e$ by $s>1$ vertices, then the color composition of $e'$
will be counted $s$ times. It corresponds to the fact that the
vertices $e$ and $e'$ are joined by $s$ edges  in $\mathcal{E}(G)$.
\end{proof}

\section{Transversals of Hypergraphs}

A {\it transversal} of a hypergraph is a set of vertices that covers
each hyperedge one time, i.\,e., the intersection between the
transversal and  each hyperedge consists of  one vertex. An
$\ell$-{\it fold transversal} of a hypergraph is a set of vertices
which cover each edge $\ell$ times.

In particular,  in a bipartite graph each part is a transversal of
the graph. It is easy to see that the indicator function of each
multifold transversal is a perfect $2$-coloring of a $k$-uniform
$r$-regular hypergraph. Moreover, the following holds:

\begin{claim}\label{bipartity1}
    A function $f:V\rightarrow \{0,1\}$ is an indicator function
    of an $\ell$-fold transversal in a $k$-uniform $r$-regular
hypergraph
    $G$  if and only if $f$ is a perfect coloring of the multigraph
    $\mathcal{M}_{12}(\mathcal{D}(G))$  with the quotient matrix
$S=\begin{pmatrix}
\ell r & (k-\ell)r\\
\ell r & (k-\ell)r
\end{pmatrix}$.
\end{claim}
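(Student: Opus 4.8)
The plan is to use the matrix criterion for perfect colorings (Proposition~\ref{s:perfcol_criteria}) applied to the multigraph $\mathcal{M}_{12}(\mathcal{D}(G))$, whose adjacency matrix is $YY^*$ where $Y$ is the incidence matrix of $G$. Writing $f$ as a column vector and $F=(\,\bar 1-f\ \ f\,)$ for the associated $n\times 2$ indicator matrix (column $0$ is the indicator of the complement of the transversal, column $1$ is $f$ itself), I would show that the statement ``$f$ is the indicator of an $\ell$-fold transversal'' is equivalent to $YY^*F=FS$ with the displayed $S$.

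First I would unwind what an $\ell$-fold transversal means at the level of $Y$. The condition that $f$ covers each hyperedge exactly $\ell$ times says precisely that $Y^*f=\ell\,\bar 1_m$, where $m$ is the number of hyperedges; equivalently, since $G$ is $k$-uniform, $Y^*\bar 1_n=k\,\bar 1_m$, so $Y^*(\bar1_n-f)=(k-\ell)\bar1_m$. Next, regularity enters: $G$ being $r$-regular means every vertex lies in exactly $r$ hyperedges, i.e.\ $YY^*$ has all row sums equal to... more carefully, $Y\bar1_m=r\bar1_n$. Combining, $YY^*f=Y(\ell\bar1_m)=\ell r\,\bar1_n$ and $YY^*(\bar1_n-f)=(k-\ell)r\,\bar1_n$. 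Reading these two identities columnwise against $F=(\bar1_n-f\ \ f)$ shows exactly $YY^*F=FS$ with $S=\begin{pmatrix}\ell r & (k-\ell)r\\ \ell r & (k-\ell)r\end{pmatrix}$, because each row of $S$ sums to $kr$ and the column split is $\ell r$ versus $(k-\ell)r$ irrespective of which color the vertex has. By Proposition~\ref{s:perfcol_criteria} this is equivalent to $f$ being a perfect $2$-coloring of $\mathcal{M}_{12}(\mathcal{D}(G))$ with that quotient matrix.

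For the converse I would run the computation backwards: assume $YY^*F=FS$ with the given $S$. The first column gives $YY^*(\bar1_n-f)=(k-\ell)r(\bar1_n-f)+\ell r f=(k-\ell)r\,\bar1_n+( \ell r-(k-\ell)r)f$; adding this to the second column relation $YY^*f=\ell r(\bar1_n-f)+\ell r f=\ell r\,\bar1_n$ recovers $YY^*\bar1_n=kr\,\bar1_n$ and, more to the point, $YY^*f=\ell r\,\bar1_n$. The subtlety here is passing from $YY^*f=\ell r\,\bar1_n$ back to $Y^*f=\ell\,\bar1_m$: this needs that $Y$ (equivalently $Y^*$) is injective enough, which it is not in general. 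The clean way around this is to note that $Y^*f$ is a nonnegative integer vector with $Y(Y^*f)=\ell r\bar1_n=Y(\ell\bar1_m)$, and then to argue directly combinatorially — $\langle Y^*f,\bar1_m\rangle=\langle f,Y\bar1_m\rangle=r\langle f,\bar1_n\rangle=r|\supp f|$ while on the other hand summing $YY^*f=\ell r\bar1_n$ against $f$ and against $\bar1_n$ pins down the relevant counts — so that $Y^*f$ forces each hyperedge to be covered exactly $\ell$ times. I expect this converse direction, specifically extracting the edge-by-edge covering condition $Y^*f=\ell\bar1_m$ from the vertex-level identity $YY^*f=\ell r\bar1_n$, to be the only real obstacle; everything else is a direct translation through Proposition~\ref{s:perfcol_criteria}. (Alternatively one can invoke Proposition~\ref{bipartity} in the form that a perfect coloring of $\mathcal{M}_{12}(\mathcal{D}(G))$ with quotient matrix factoring as $S_1S_2$ lifts appropriately, but the self-contained count above is shorter.)
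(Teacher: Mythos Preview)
Your forward direction is fine and matches the paper's in substance: they phrase it as ``an $\ell$-fold transversal gives a perfect $3$-coloring of $\mathcal{D}(G)$, now restrict via Proposition~\ref{bipartity}'', but that is exactly your identity $YY^*F=FS$ unwound. (One cosmetic slip: with the paper's $S$ the \emph{first} column of $F$ must be the transversal indicator $f$, not $\bar1-f$; otherwise $YY^*F$ and $FS$ have their columns swapped. Since both rows of $S$ coincide this is harmless, but worth fixing.)

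The converse is where the content lies, and here your sketch stops just short. You correctly reduce to $YY^*f=\ell r\,\bar1_n$ and correctly flag that this does not by itself give $Y^*f=\ell\,\bar1_m$. Your proposed fix---pair against $f$ and against $\bar1_n$---is the right move, but you never state the clinching step: those pairings give $\sum_e (Y^*f)_e=\ell m$ and $\sum_e (Y^*f)_e^2=\|Y^*f\|^2=\langle YY^*f,f\rangle=\ell r\,|\supp f|=\ell^2 m$, hence the variance of $Y^*f$ over hyperedges is zero, so $Y^*f\equiv\ell$. Without that line the argument is incomplete. The paper packages this same computation more cleanly: set $h=f-\ell/k$ (equivalently $f-|f^{-1}(1)|/|V|$, since the quotient matrix forces $|f^{-1}(1)|/|V|=\ell/k$), observe via Corollary~\ref{corol5} that $YY^*h=0$, and then use positive semidefiniteness once: $\|Y^*h\|^2=\langle YY^*h,h\rangle=0$, so $Y^*h=0$, i.e.\ $Y^*f=\ell\,\bar1_m$. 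That single PSD step is exactly what your ``sum against $f$'' is reaching for, and it replaces your two separate pairings plus the unnamed Cauchy--Schwarz/variance conclusion.
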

\begin{proof}
It is clear, that each $\ell$-fold transversal generates some
perfect $3$-coloring  of $\mathcal{D}(G)$: the first two colors are
the transversal and its complement in the first part of
$\mathcal{D}(G)$, all vertices of the second part of
$\mathcal{D}(G)$ are colored by the third color.  By Proposition
     \ref{bipartity},  the restriction of this coloring
     to the first part of $\mathcal{D}(G)$  is a perfect $2$-coloring
     of $\mathcal{M}_{12}(\mathcal{D}(G))$.
      The quotient matrix of this coloring is obtained from the following argument.
Each vertex of the $\ell$-fold transversal is adjacent to itself $r$
times and also $\ell-1$ times to other vertices of the $\ell$-fold
transversal in every of $r$ incident  hyperedges. Each vertex from
the complement of the $\ell$-fold transversal is adjacent to $\ell$
vertices in every of $r$ incident hyperedges.

 Suppose that  $f:V(\mathcal{M}_{12}(\mathcal{D}(G)))\rightarrow \{0,1\}$ is
 a perfect $2$-coloring with  quotient matrix $S$.
Define $h(x)=f(x)-|f^{-1}(1)|/|V(G)|$. By Corollary \ref{corol5} it
follows that $h$ is an eigenvector   with eigenvalue $0$ of the
adjacency matrix of $\mathcal{M}_{12}(\mathcal{D}(G))$. Then
$YY^*h=\bar0$. Consequently, it holds $(Y^*h,Y^*h)=(YY^*h,h)=0$ and
$Y^*h=\bar0$. The last equality implies that the sum of values of
$h$ in each hyperedge equals $0$. Since $f$ takes only two distinct
values, $-|f^{-1}(1)|/|V(G)|$ and $1-|f^{-1}(1)|/|V(G)|$, $h$ takes
every of these the same numbers of times in each hyperedge. Then $f$
satisfies the same property.
\end{proof}

\section{Independent Sets}

A subset of vertices in a graph is called an {\it independent }
whenever it does not contain adjacent vertices. The definition of an
independent set is the same for multigraphs without loops. It is
well known   the Delsarte--Hoffman upper bound for the cardinality
of an independent set in an $r$-regular graph (see, for instance,
\cite{GM} Theorem 2.4.1). This bound is equal to $\frac{-\theta
v}{r-\theta}$, where $v$ is the number of vertices in the graph and
$\theta$ is the minimal eigenvalue of the graph.

\begin{theorem}\label{thDH}
{ Let  $\theta$ be the minimal eigenvalue of an $r$-regular
multigraph $G$. If each vertices of  $A\subset V(G)$ is adjacent to
at most $t<r$
     vertices of $A$, then
     $|A|\leq\frac{(t-\theta)|V(G)|}{r-\theta}$. If in addition the cardinality of  $A$
     equals $\frac{(t-\theta)|V(G)|}{r-\theta}$, then the indicator
     function $f$ of $A$ is a perfect $2$-coloring of  $G$ with  quotient
     matrix $S=\begin{pmatrix}
t & r-t \\
t-\theta & r-t+\theta
\end{pmatrix}$. }
\end{theorem}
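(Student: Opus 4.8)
The plan is to use the standard interlacing/variational argument for the Delsarte--Hoffman bound, phrased in terms of eigenvectors, and then extract the perfect colouring from the equality case via Corollary \ref{corol5}. Write $v=|V(G)|$, $a=|A|$, and let $f=\chi_A$ be the indicator vector of $A$. Decompose $f$ relative to the all-ones vector $\mathbf 1$: since $G$ is $r$-regular, $\mathbf 1$ is an eigenvector with eigenvalue $r$, and we set $h=f-\tfrac{a}{v}\mathbf 1$, so that $h\perp\mathbf 1$ and $\|h\|^2=a-\tfrac{a^2}{v}=\tfrac{a(v-a)}{v}$. The key quantity is the Rayleigh quotient $h^\top M h$, where $M$ is the adjacency matrix of $G$.

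First I would compute $h^\top M h$ two ways. On one hand, since $h$ lies in the orthogonal complement of $\mathbf 1$ and $\theta$ is the minimal eigenvalue of $M$, we have $h^\top M h\ge \theta\,\|h\|^2 = \theta\cdot\tfrac{a(v-a)}{v}$. (Here one should note that $\mathbf 1$ need not span the whole $r$-eigenspace, but that only helps: every vector orthogonal to $\mathbf 1$ still has Rayleigh quotient at least $\theta$, because $\theta$ is the global minimum over all nonzero vectors.) On the other hand, $f^\top M f = \sum_{x\in A}|S(x)\cap A|\le t\,a$ by hypothesis, and expanding $h=f-\tfrac{a}{v}\mathbf 1$ using $M\mathbf 1=r\mathbf 1$ gives
\[
h^\top M h = f^\top M f - \frac{2a}{v}\,\mathbf 1^\top M f + \frac{a^2}{v^2}\,\mathbf 1^\top M\mathbf 1
= f^\top M f - \frac{2a}{v}\,r a + \frac{a^2}{v^2}\,r v
= f^\top M f - \frac{r a^2}{v}.
\]
Combining, $\theta\cdot\tfrac{a(v-a)}{v}\le t a-\tfrac{r a^2}{v}$. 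Dividing by $a>0$ (the case $a=0$ is trivial) and multiplying by $v$ yields $\theta(v-a)\le tv-ra$, i.e. $a(r-\theta)\le v(t-\theta)$, which is exactly $|A|\le\tfrac{(t-\theta)|V(G)|}{r-\theta}$.

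For the equality case, when $a=\tfrac{(t-\theta)v}{r-\theta}$ both inequalities above become equalities. Equality in $h^\top M h\ge\theta\|h\|^2$ forces $h$ to be an eigenvector of $M$ with eigenvalue $\theta$ (a vector orthogonal to $\mathbf 1$ achieving the minimal Rayleigh quotient must lie in the $\theta$-eigenspace). Equality in $f^\top M f\le ta$ forces every vertex of $A$ to have exactly $t$ neighbours in $A$. Now I want to invoke Corollary \ref{corol5}: that corollary says $f$ is a perfect $2$-colouring with quotient matrix $\begin{pmatrix}r-b&b\\ c&r-c\end{pmatrix}$ iff the rescaled function taking value $\tfrac{b}{b+c}$ on $f^{-1}(0)$ and $-\tfrac{c}{b+c}$ on $f^{-1}(1)$ is a $(r-b-c)$-eigenfunction. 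Here we want the quotient matrix $\begin{pmatrix}t&r-t\\ t-\theta&r-t+\theta\end{pmatrix}$, so comparing entries we read off $b=r-t$, $c=t-\theta$, hence $b+c=r-\theta$ and $r-b-c=\theta$; the rescaled eigenfunction in the corollary is then $\tfrac{r-t}{r-\theta}$ on $A^c$ and $-\tfrac{t-\theta}{r-\theta}$ on $A$, which one checks is precisely an affine transformation of $h$ — indeed $h$ equals $-f+\tfrac{a}{v}\mathbf 1$ up to sign, and $\tfrac{a}{v}=\tfrac{t-\theta}{r-\theta}$, so on $A$ we get $\tfrac{a}{v}-1=-\tfrac{r-t}{r-\theta}$ and on $A^c$ we get $\tfrac{a}{v}=\tfrac{t-\theta}{r-\theta}$, matching the corollary's function up to an overall sign (and eigenfunctions are closed under scaling). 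Since $h$ is a $\theta$-eigenfunction, Corollary \ref{corol5} delivers exactly the claimed perfect $2$-colouring and quotient matrix.

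The main obstacle is bookkeeping around the role of $\mathbf 1$: one must be careful that the inequality $h^\top Mh\ge\theta\|h\|^2$ holds for \emph{all} $h\perp\mathbf 1$ even when the top eigenspace is larger than $\mathrm{span}(\mathbf 1)$, and, more delicately, that in the equality case $h$ genuinely lands in the $\theta$-eigenspace rather than merely having the right Rayleigh quotient — this uses that if $h=\sum_i c_i u_i$ in an orthonormal eigenbasis with eigenvalues $\mu_i\ge\theta$, then $h^\top Mh=\sum_i\mu_i c_i^2=\theta\sum_i c_i^2$ forces $c_i=0$ whenever $\mu_i>\theta$. A secondary point to verify is that a quotient matrix as written is consistent with $r$-regularity (rows sum to $r$) and that $\theta<r$ and $t<r$ guarantee the denominator $r-\theta>0$ and that both off-diagonal entries are nonnegative, so the matrix is a legitimate quotient matrix; also one should confirm directly from ``every vertex of $A$ has exactly $t$ neighbours in $A$'' together with $r$-regularity that each vertex of $A$ has $r-t$ neighbours outside $A$, giving the first row of $S$ independently, which provides a useful cross-check against the eigenvalue computation.
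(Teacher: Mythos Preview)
Your argument is essentially the paper's own: decompose $f=\chi_A$ as $\tfrac{|A|}{|V|}\mathbf 1+h$ with $h\perp\mathbf 1$, bound $f^\top Mf\le t|A|$ from the hypothesis and $h^\top Mh\ge\theta\|h\|^2$ from minimality of $\theta$, combine to get the inequality, and in the equality case conclude that $h$ lies in the $\theta$-eigenspace and invoke Corollary~\ref{corol5}. The paper additionally reduces to the connected case so that $\mathbf 1$ spans the $r$-eigenspace, but as you note this is unnecessary for the inequality since $\theta$ is the global minimum of the Rayleigh quotient.

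One bookkeeping slip: in the theorem's matrix $S$ the \emph{first} row corresponds to $A$ (your own cross-check ``every vertex of $A$ has exactly $t$ neighbours in $A$'' confirms this, and the paper's derivation of $s_{21}$ via $|A|(r-t)=s_{21}(|V|-|A|)$ makes it explicit), whereas in Corollary~\ref{corol5} the first row is colour~$0$, i.e.\ $A^c$. So when matching $S$ against $\begin{pmatrix} r-b & b\\ c & r-c\end{pmatrix}$ you must first swap rows and columns, giving $b=t-\theta$, $c=r-t$ (not $b=r-t$, $c=t-\theta$). With this correction the corollary's eigenfunction is $\tfrac{t-\theta}{r-\theta}$ on $A^c$ and $-\tfrac{r-t}{r-\theta}$ on $A$, which is exactly $-h$; your claimed ``match up to an overall sign'' between two functions that are in fact not scalar multiples is then unnecessary.
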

\begin{proof}
Assume that $G$ is connected, in opposite case we can consider each
connected component separately. Put $n=|V(G)|$. Since the adjacency
matrix $M$ of $G$ is symmetric, there exists an orthonormal basis
consisting of eigenvectors $\phi_i$ of $M$. The regularity and
connectedness of $G$ implies that  the maximal eigenvalue
$r=\theta_0$ corresponds to the unique basis vector
$\phi_0=\mathbf{1}/\sqrt{n}$. Consider the expansion of $f$ with
respect to this basis as $f=\sum_i\alpha_i\phi_i$. Then
\begin{equation}\label{PCe11}
t(f,f)\geq (Mf,f)=\sum_i\alpha^2_i\theta_i,
\end{equation} where $\theta_i$
is the eigenvalue corresponding to $\phi_i$. By the equality
$|A|=(f,f)= \sum_i\alpha^2_i$ we obtain that $\sum\limits_{i\neq
0}\alpha^2_i= (f,f)-\alpha^2_0=|A|- |A|^2/n$. Therefore,
(\ref{PCe11}) and the
minimality of $\theta$ imply that $t|A|\geq (|A|- |A|^2/n)\theta + r|A|^2/n.$\\
 Since
$r-\theta>0$ we obtain the inequality $\frac{|A|}{n}\leq
\frac{t-\theta}{r-\theta}$.

The equality $\frac{|A|}{n}=\frac{t-\theta}{r-\theta}$ holds only in
the case as  $f=\phi+\alpha_0\varphi_0$, where $\phi$ is the
eigenvector corresponding eigenvalue
 $\theta$. By Corollary \ref{corol5} we obtain that
 $f$ is a perfect coloring of $G$. We can find the entry $s_{21}$ of $S$
by the equation $|A|(r-t)=s_{21}(n-|A|)$, where the left and right
parts of the equation are the numbers of edges incident to vertices
of different colors.
\end{proof}

If we put  $t=0$ in Theorem \ref{thDH}, then we obtain the well
known Delsarte--Hoffman  bound on the cardinality of independent
sets. Moreover, Theorem \ref{thDH} implies that the indicator
function of each independent set attaining the Delsarte--Hoffman
bound is a perfect coloring. Conversely, it is easy to see  that the
first color of a coloring with  quotient matrix $\begin{pmatrix}
0 & r \\
-\theta & r+\theta
\end{pmatrix}$ is an independent set attaining  the Delsarte--Hoffman
bound.

\begin{claim}\label{exerDH}
If  $T$ is a transversal of a  $k$-uniform $r$-regular hypergraph
$G$, then $T$ is an independent set of the multigraph obtained from
$\mathcal{M}_{12}(\mathcal{D}(G))$ by removing all loops, and it
attains the Delsarte--Hoffman bound.
\end{claim}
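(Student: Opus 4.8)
The statement combines two assertions: that a transversal $T$ is independent in the loopless version of $\mathcal{M}_{12}(\mathcal{D}(G))$, and that $|T|$ meets the Delsarte--Hoffman bound for that multigraph. The first is immediate from the definition of a transversal: two distinct vertices $u,v\in T$ cannot lie in a common hyperedge (a transversal meets each hyperedge in exactly one vertex), so the entry $m_{uv}$ of $YY^*$, which counts hyperedges containing both $u$ and $v$, is zero; hence no edge of the loopless multigraph joins two vertices of $T$. So the real work is identifying the relevant parameters and checking the bound is attained.

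For the second part, the plan is to invoke Proposition~\ref{bipartity1} together with Theorem~\ref{thDH}. By Proposition~\ref{bipartity1} applied with $\ell=1$, the indicator function of $T$ is a perfect $2$-coloring of $\mathcal{M}_{12}(\mathcal{D}(G))$ with quotient matrix $\begin{pmatrix} r & (k-1)r \\ r & (k-1)r \end{pmatrix}$ (here every vertex carries $r$ loops, since each vertex of a $k$-uniform $r$-regular hypergraph lies in $r$ hyperedges). Removing the $r$ loops at each vertex turns $\mathcal{M}_{12}(\mathcal{D}(G))$ into an $(kr-r)$-regular loopless multigraph $G'$, and subtracting $rI$ from the adjacency matrix shifts every eigenvalue down by $r$; correspondingly the quotient matrix of the induced coloring of $G'$ becomes $\begin{pmatrix} 0 & (k-1)r \\ 0 & (k-1)r \end{pmatrix}$. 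In this quotient matrix the first color has no outgoing edges to itself, i.e. $T$ is independent in $G'$ with $t=0$ in the language of Theorem~\ref{thDH}.

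Now apply the converse direction noted right after Theorem~\ref{thDH}: a $2$-coloring of an $r'$-regular graph with quotient matrix $\begin{pmatrix} 0 & r' \\ -\theta & r'+\theta \end{pmatrix}$ has its first color class an independent set attaining the Delsarte--Hoffman bound. Comparing with $\begin{pmatrix} 0 & (k-1)r \\ 0 & (k-1)r \end{pmatrix}$ forces $r'=(k-1)r$ and $\theta=0$, i.e. the second color class, hence the relevant "$-\theta$" position, reads $0$, which is consistent precisely when $0$ is the minimal eigenvalue of $G'$. So the key remaining point is to confirm that $\theta_{\min}(G') = 0$. This follows from Proposition~\ref{eigenfun}(1): the eigenvalues of $\mathcal{M}_{12}(\mathcal{D}(G)) = YY^*$ are all nonnegative, hence the eigenvalues of $G' = YY^* - rI$ are all $\ge -r$; but more to the point, one checks $0$ is actually attained — the function $h = \chi_T - |T|/|V(G)|$ lies in the kernel of $YY^*$ by the argument in the proof of Proposition~\ref{bipartity1} (since $Y^*h=\bar 0$), so $0$ is an eigenvalue of $YY^*$, giving $-r$... wait, that shows $YY^*$ has eigenvalue $0$, i.e. $G'$ has eigenvalue $-r$, not $0$.

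Let me restate the final paragraph more carefully, since the bound should be phrased relative to $\mathcal{M}_{12}(\mathcal{D}(G))$ itself, which is what the statement asks. The loopless multigraph $M_{12}' := \mathcal{M}_{12}(\mathcal{D}(G)) - rI$ (removing loops) is $(k-1)r$-regular. Its minimal eigenvalue $\theta$ satisfies $\theta \ge -r$ by Proposition~\ref{eigenfun}(1), and equality $\theta = -r$ holds because $YY^*$ is singular: indeed $Y$ has more columns than the rank bound allows in the relevant regime, or directly, $\chi_T - |T|/n \in \ker(YY^*)$ as above. Plugging $t=0$, $r \mapsto (k-1)r$, $\theta = -r$, $|V| = |V(G)| =: n$ into Theorem~\ref{thDH}'s bound gives $|T| \le \frac{(0-(-r))\,n}{(k-1)r - (-r)} = \frac{rn}{kr} = \frac{n}{k}$. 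On the other hand, since $G$ is $k$-uniform with $m$ hyperedges and $r$-regular, double-counting incidences gives $kn_{\text{hyp}}$... rather, $nr = km$ where $m = |E(G)|$, and a transversal $T$ meets each of the $m$ hyperedges exactly once while each vertex of $T$ lies in $r$ hyperedges, so $|T|\,r = m$, whence $|T| = m/r = n/k$. Thus $|T| = n/k$ exactly meets the bound, and by the equality clause of Theorem~\ref{thDH} the indicator function of $T$ is a perfect $2$-coloring of $M_{12}'$ with quotient matrix $\begin{pmatrix} 0 & (k-1)r \\ r & (k-1)r - r\end{pmatrix} = \begin{pmatrix} 0 & (k-1)r \\ r & (k-2)r \end{pmatrix}$, completing the proof.

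The main obstacle is the bookkeeping around loops and the eigenvalue shift: one must be consistent about whether parameters refer to $\mathcal{M}_{12}(\mathcal{D}(G))$ with its $r$ loops per vertex or to the loopless multigraph, and correspondingly whether the minimal eigenvalue is $0$ or $-r$. Everything else — independence of $T$, the count $|T| = n/k$, and the quotient matrix — is routine once Propositions~\ref{eigenfun}, \ref{bipartity1} and Theorem~\ref{thDH} are in hand.
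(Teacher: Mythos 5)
Your argument is correct and follows essentially the same route as the paper: Proposition \ref{bipartity1} (or the transversal definition directly) supplies the perfect $2$-coloring, Proposition \ref{eigenfun} gives nonnegativity of the spectrum of $YY^*$ so that after removing the $r$ loops the multigraph is $(k-1)r$-regular with minimal eigenvalue exactly $-r$ (the paper certifies that $0$ lies in the spectrum via Corollary \ref{p:perf_color_properties} applied to the quotient matrix, you via the explicit kernel vector $\chi_{_T}-\tfrac1k\mathbf 1$ --- the same idea), and the count $|T|=n/k$ matches the bound $\frac{rn}{(k-1)r+r}$. The only blemishes are in the middle paragraph you yourself supersede: the loopless quotient matrix is $\begin{pmatrix}0&(k-1)r\\ r&(k-2)r\end{pmatrix}$, not $\begin{pmatrix}0&(k-1)r\\ 0&(k-1)r\end{pmatrix}$ (your final paragraph gets this right), and the aside about the rank of $Y$ is unnecessary and not valid in general, whereas the kernel-vector justification you give instead is the one that works.
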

\begin{proof}
Proposition \ref{bipartity1} implies that each transversal of  $G$
generates a perfect coloring of $\mathcal{M}_{12}(\mathcal{D}(G))$
with quotient matrix $S=\begin{pmatrix}
r & (k-1)r \\
r & (k-1)r
\end{pmatrix}$.

By Proposition \ref{eigenfun}, all eigenvalues of
 $\mathcal{M}_{12}(\mathcal{D}(G))$ are nonnegative. Every
 eigenvalue of a quotient matrix of a perfect coloring of the graph belongs
  to the spectrum  of the graph (see Corollary \ref{p:perf_color_properties}).
Thus the eigenvalue  $0$ of $S$ is the minimal eigenvalue of
  $\mathcal{M}_{12}(\mathcal{D}(G))$.
  Each vertex of  $\mathcal{M}_{12}(\mathcal{D}(G))$ is incident to
$r$ loops. Removing all loops, we obtain a $(k-1)r$-regular
multigraph
 $\Gamma$ with the minimal eigenvalue $-r$. By the definition of transversal,
  we have
 that   $T$ is an independent set and
  $|T|=|V(\Gamma)|/k=\frac{ r|V(\Gamma)|}{(k-1)r+r}$. Then it attains the
Delsart--Hoffman bound.
\end{proof}

A complete  subgraph of a graph is called a {\it clique}. It is easy
to see that each clique corresponds to an independent set in the
complement of the graph and vice versa. It is  known  (see
\cite{GM}) that  the cardinality of each clique in an arc-transitive
graph $r$-regular graph is at most $1-\frac{r}{\lambda}$, where
$\lambda$ is the minimal eigenvalue of the graph. A clique with
cardinality $1-\frac{r}{\lambda_{\min}}$ in an $r$-regular graph is
called the {\it Delsarte clique}.

Consider a $(k-1)r$-regular multigraph $\Gamma$ with the minimal
eigenvalue $-r$ obtained from $\mathcal{M}_{12}(\mathcal{D}(G))$ by
 removing all loops.  It is clear that each hyperedge  of $G$ corresponds
to a clique of size $k$ in $\Gamma$.  If  $\Gamma$ is a simple graph
then this clique turns out to be a Delsarte clique because of
$k=1-\frac{(k-1)r}{-r}$. It is proved in \cite{KMP}, Theorem 2(ii),
that the indicator function of  a subset of vertices of a
distance-regular graph is a perfect $2$-coloring if the  subset
intersects each Delsarte clique in the same number of vertices. If
$\Gamma$ is a distance-regular graph then this theorem coincides
with Proposition \ref{bipartity1}.

\section{Combinatorial Designs}

The {\it Johnson graph} $J(n,k)$ is a graph whose vertices are
binary $n$-tuples of weight $k$ and two vertices are joined by an
edge if the Hamming distance between the corresponding $n$-tuples
equals $2$.

A {\it combinatorial design} with parameters $t$-$(n,k,\lambda)$ is
a collection of $k$-elements subsets ({\it blocks}) of the
$n$-element set such that each $t$-element subset is included into
exactly $\lambda$ blocks from the collection.

The blocks of a $t$-$(n,k,\lambda)$-design can be represented as the
vertices of $J(n,k)$. In the case of $\lambda=1$,
$t$-$(n,k,\lambda)$-designs are usually denoted by   $S(t,k,n)$, if
in addition $t=k-1$, then they are called by {\it Steiner systems}
of order $n$, for instance, the Steiner triple system as $k=3$, the
Steiner quadruple system as $k=4$ and so on.

A combinatorial design with parameters $t$-$(n,k,\lambda)$ can be
regarded as a $\lambda$-fold transversal in the hypergraph
$G_{n,k,t}$ whose vertices are all possible $k$-element blocks and
hyperedges consist of blocks including a fixed  $t$-element set.

In the case $t=k-1$, the multigraph
$\mathcal{M}_{12}(\mathcal{D}(G_{n,k,t}))$ without loops coincides
with $J(n,k)$.
 Proposition
 \ref{exerDH} implies a well-known statement:

\begin{corollary}
The Steiner systems $S(k-1,k,n)$ are  maximal independent sets in
$J(n,k)$.
\end{corollary}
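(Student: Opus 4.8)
The plan is to read the statement off directly from Proposition~\ref{exerDH}, since the surrounding text has already set up the dictionary between Steiner systems and transversals; only a little bookkeeping with parameters remains. By definition a Steiner system $S(k-1,k,n)$ is a $(k-1)$-$(n,k,1)$ design, i.e.\ a collection $T$ of $k$-element blocks of the $n$-element set such that each $(k-1)$-element subset lies in exactly one block of $T$. In the hypergraph $G_{n,k,k-1}$ the hyperedge attached to a $(k-1)$-set $S$ consists of all $k$-blocks containing $S$; hence a set of blocks meets every hyperedge in exactly one vertex precisely when it is an $S(k-1,k,n)$, so the transversals of $G_{n,k,k-1}$ are exactly the Steiner systems $S(k-1,k,n)$. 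Moreover $G_{n,k,k-1}$ is uniform and regular: each hyperedge has $n-k+1$ vertices (the $n-k+1$ ways to extend a $(k-1)$-set to a $k$-set), and each block lies in exactly $k$ hyperedges (one per $(k-1)$-subset of the block), so $G_{n,k,k-1}$ is $(n-k+1)$-uniform and $k$-regular.

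Next I would invoke the identification, already noted in the text, that the loopless multigraph underlying $\mathcal{M}_{12}(\mathcal{D}(G_{n,k,k-1}))$ is $J(n,k)$: two distinct blocks $B_1\neq B_2$ are joined by exactly $\binom{|B_1\cap B_2|}{k-1}$ edges, which equals $1$ when $|B_1\cap B_2|=k-1$ (Hamming distance $2$) and $0$ otherwise, so deleting loops yields the simple graph $J(n,k)$. Now apply Proposition~\ref{exerDH} to $G=G_{n,k,k-1}$, where the ``uniformity'' $n-k+1$ plays the role of ``$k$'' and the ``regularity'' $k$ plays the role of ``$r$'' in that statement: every transversal $T$ is an independent set of $\Gamma:=J(n,k)$ that attains the Delsarte--Hoffman bound. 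Since that bound is an upper bound for the size of \emph{every} independent set of $\Gamma$, the set $T$ has maximum cardinality, hence is a maximal independent set. Combining this with the dictionary above yields the corollary.

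There is no real obstacle here; the statement is genuinely a corollary of Proposition~\ref{exerDH}. The one point worth checking is that the parameters match up: with ``uniformity'' $n-k+1$ and ``regularity'' $k$, Proposition~\ref{exerDH} produces an $(n-k)k$-regular multigraph of least eigenvalue $-k$, and indeed $J(n,k)$ is $k(n-k)$-regular with minimal eigenvalue $-k$ (for $k\le n/2$); correspondingly the Delsarte--Hoffman bound $\binom{n}{k}/(n-k+1)$ coincides with the block count $\binom{n}{k-1}/k$ of an $S(k-1,k,n)$, which confirms that the application is legitimate.
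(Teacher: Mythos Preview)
Your argument is correct and follows exactly the route the paper takes: the corollary is stated immediately after the observation that $\mathcal{M}_{12}(\mathcal{D}(G_{n,k,k-1}))$ without loops is $J(n,k)$, and the paper's proof is simply the sentence ``Proposition~\ref{exerDH} implies a well-known statement''. You have just spelled out the bookkeeping (uniformity $n-k+1$, regularity $k$, matching of the Delsarte--Hoffman bound with the block count) that the paper leaves implicit.
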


Denote $\mathcal{M}_{12}(\mathcal{D}(G_{n,k,t}))$ without loops  by
$(J(n,k))_{k-t}$.  In the case $0<t<k-1$, we obtain $(J(n,k))_{k-t}$
from
  $J(n,k)$ by joining vertices of  $J(n,k)$ corresponding to blocks
  including a
  joint $t$-element subset  by the suitable number of edges.

Proposition  \ref{bipartity1} implies the following statement:

\begin{corollary}\label{c:design21}
A set $D$ is a $t$-$(n,k,\lambda)$-design if and only if the
indicator function of $D$ is a perfect $2$-coloring of
$(J(n,k))_{k-t}$ with quotient matrix\\ $\left(\begin{array}{cc}
{k\choose t}(\lambda-1)& {k\choose t}({{n-k}\choose{k-t}}-\lambda+1) \\
{k\choose t}\lambda&  {k\choose t}({{n-k}\choose{k-t}}-\lambda)\\
\end{array} \right)$.
\end{corollary}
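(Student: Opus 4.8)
The strategy is to apply Proposition \ref{bipartity1} directly to the hypergraph $G_{n,k,t}$, whose vertices are all $k$-element blocks of an $n$-set and whose hyperedges are indexed by $t$-element subsets $T$, the hyperedge $e_T$ consisting of all blocks $B$ with $T\subseteq B$. First I would check that $G_{n,k,t}$ is uniform and regular: each hyperedge $e_T$ contains exactly $\binom{n-t}{k-t}$ blocks (choose the remaining $k-t$ elements of the block outside $T$), so the hypergraph is $\binom{n-t}{k-t}$-uniform; and each block $B$ lies in exactly $\binom{k}{t}$ hyperedges (one for each $t$-subset $T\subseteq B$), so it is $\binom{k}{t}$-regular. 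With the notation of the excerpt, a $t$-$(n,k,\lambda)$-design $D$ is precisely a set of blocks meeting every hyperedge $e_T$ in exactly $\lambda$ vertices, i.e.\ a $\lambda$-fold transversal of $G_{n,k,t}$. Applying Proposition \ref{bipartity1} with $k\leftarrow\binom{n-t}{k-t}$, $r\leftarrow\binom{k}{t}$ and $\ell\leftarrow\lambda$, we get that $D$ is a $\lambda$-fold transversal iff $\chi_D$ is a perfect $2$-coloring of $\mathcal{M}_{12}(\mathcal{D}(G_{n,k,t}))$ with quotient matrix
\[
\begin{pmatrix}
\lambda\binom{k}{t} & \bigl(\binom{n-t}{k-t}-\lambda\bigr)\binom{k}{t}\\
\lambda\binom{k}{t} & \bigl(\binom{n-t}{k-t}-\lambda\bigr)\binom{k}{t}
\end{pmatrix}.
\]

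The next step is to pass from $\mathcal{M}_{12}(\mathcal{D}(G_{n,k,t}))$, which carries $r=\binom{k}{t}$ loops at every vertex, to the loopless multigraph $(J(n,k))_{k-t}$. Removing the loops subtracts $\binom{k}{t}$ from every vertex's degree and, crucially, subtracts $\binom{k}{t}$ from each diagonal entry of the quotient matrix while leaving the off-diagonal entries untouched — this is the standard observation that deleting a uniform number of loops shifts the adjacency matrix by a scalar multiple of $I$ and hence shifts the quotient matrix of any perfect coloring by the same scalar multiple of $I$. Thus the quotient matrix for the coloring of $(J(n,k))_{k-t}$ becomes
\[
\begin{pmatrix}
(\lambda-1)\binom{k}{t} & \bigl(\binom{n-t}{k-t}-\lambda\bigr)\binom{k}{t}\\
\lambda\binom{k}{t} & \bigl(\binom{n-t}{k-t}-\lambda-1\bigr)\binom{k}{t}
\end{pmatrix}.
\]
It only remains to identify $\binom{n-t}{k-t}$ with the expression appearing in the statement. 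We have $\binom{n-t}{k-t}=\binom{n-k}{k-t}+1$ is \emph{not} an identity in general, so the arithmetic to match must instead use the Vandermonde-type split of the off-diagonal entry: the number of blocks $B'\neq B$ sharing a given $t$-subset $T\subseteq B$ is $\binom{n-t}{k-t}-1$, and the claimed off-diagonal coefficient $\binom{n-k}{k-t}-\lambda+1$ together with the diagonal coefficient $\binom{n-k}{k-t}-\lambda$ should sum with the remaining terms to reproduce $\binom{n-t}{k-t}-1$. I expect this reconciliation — showing $(\lambda-1)+\bigl(\binom{n-k}{k-t}-\lambda+1\bigr)$ and $\lambda+\bigl(\binom{n-k}{k-t}-\lambda\bigr)$ both equal $\binom{n-t}{k-t}-1$, equivalently $\binom{n-k}{k-t}=\binom{n-t}{k-t}-1$ — to require care, and in fact this forces the additional combinatorial count that in the multigraph $(J(n,k))_{k-t}$ two blocks are joined once for \emph{each} common $t$-subset they both contain, not once total; so the $\mathcal{M}_{12}$ edge multiplicity between $B$ and $B'$ is $\binom{|B\cap B'|}{t}$, and the row sums of the quotient matrix are $\binom{k}{t}\binom{n-t}{k-t}$ as required. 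The main obstacle, then, is this bookkeeping of edge multiplicities: verifying that the per-hyperedge counts in the proof of Proposition \ref{bipartity1} (each vertex of the transversal adjacent to $\ell-1$ others of the transversal in each incident hyperedge, etc.) aggregate correctly over the $\binom{k}{t}$ hyperedges through a fixed block to yield exactly the stated matrix, and in particular that the normalization by $\binom{k}{t}$ is the right common factor.

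Once the multiplicity bookkeeping is settled, the corollary follows formally: Proposition \ref{bipartity1} supplies the equivalence between $\lambda$-fold transversals and perfect $2$-colorings with the unnormalized quotient matrix, the loop-removal lemma adjusts the diagonal, and the identification $\mathcal{M}_{12}(\mathcal{D}(G_{n,k,t}))\setminus\{\text{loops}\}=(J(n,k))_{k-t}$ (which is the definition introduced just above the corollary) completes the argument. No genuinely new idea beyond Propositions \ref{bipartity1} and \ref{eigenfun} is needed; the content is entirely in getting the binomial coefficients to line up.
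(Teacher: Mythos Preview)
Your approach is exactly the paper's: the entire proof in the paper is the single line ``Proposition~\ref{bipartity1} implies the following statement,'' and you have carried out precisely that implication --- verifying that $G_{n,k,t}$ is $\binom{n-t}{k-t}$-uniform and $\binom{k}{t}$-regular, plugging into Proposition~\ref{bipartity1}, and then subtracting $\binom{k}{t}I$ to pass from $\mathcal{M}_{12}(\mathcal{D}(G_{n,k,t}))$ to the loopless multigraph $(J(n,k))_{k-t}$. Your intermediate matrix
\[
\begin{pmatrix}
(\lambda-1)\binom{k}{t} & \bigl(\binom{n-t}{k-t}-\lambda\bigr)\binom{k}{t}\\[4pt]
\lambda\binom{k}{t} & \bigl(\binom{n-t}{k-t}-\lambda-1\bigr)\binom{k}{t}
\end{pmatrix}
\]
is the correct one, and no further ``multiplicity bookkeeping'' is needed.

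The discrepancy you flagged is real, but it is a typo in the paper rather than a gap in your argument. Matching the printed matrix would force the identity $\binom{n-t}{k-t}-1=\binom{n-k}{k-t}$, which holds for $t=k-1$ (the Johnson-graph case that the authors certainly had foremost in mind) but fails for $t<k-1$; indeed the degree of $(J(n,k))_{k-t}$ is $\sum_{B'\neq B}\binom{|B\cap B'|}{t}=\binom{k}{t}\bigl(\binom{n-t}{k-t}-1\bigr)$, not $\binom{k}{t}\binom{n-k}{k-t}$. So the second-column entries in the statement should read $\binom{n-t}{k-t}-\lambda$ and $\binom{n-t}{k-t}-\lambda-1$ (and the same correction applies, with Gaussian coefficients, to the $q$-analog in Corollary~\ref{c:design23}). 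You should trust your computation and stop there.
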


For the Steiner systems $S(k-1,k,n)$ Corollary \ref{c:design21} was
known before. By Proposition \ref{exerDH} we have

\begin{corollary}\label{c:design11}
Each $t$-$(n,k,1)$-design is a maximal independent set in
$(J(n,k))_{k-t}$.
\end{corollary}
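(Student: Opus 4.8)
The plan is to obtain Corollary \ref{c:design11} as a direct specialization of Proposition \ref{exerDH} to the hypergraph $G_{n,k,t}$. First I would check that $G_{n,k,t}$ is uniform and regular. A hyperedge of $G_{n,k,t}$ is the family of all $k$-element blocks containing a prescribed $t$-element subset, and the number of such blocks is ${{n-t}\choose{k-t}}$, independently of the chosen $t$-subset; conversely, a fixed $k$-element block $B$ belongs precisely to the hyperedges indexed by the $t$-subsets of $B$, of which there are ${k\choose t}$. Hence $G_{n,k,t}$ is ${{n-t}\choose{k-t}}$-uniform and ${k\choose t}$-regular.

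Next I would record the identification of designs with transversals already indicated in the text: a collection $D$ of $k$-element blocks is a $t$-$(n,k,1)$-design exactly when every $t$-element subset lies in precisely one block of $D$, which is precisely the condition that $D$ intersects every hyperedge of $G_{n,k,t}$ in a single vertex, i.\,e.\ that $D$ is a transversal of $G_{n,k,t}$.

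With these two observations in hand, Proposition \ref{exerDH} applied to $G_{n,k,t}$ immediately yields that $D$ is an independent set of the multigraph obtained from $\mathcal{M}_{12}(\mathcal{D}(G_{n,k,t}))$ by removing all loops, and that $|D|$ attains the Delsarte--Hoffman bound of that multigraph; by definition this loop-free multigraph is exactly $(J(n,k))_{k-t}$. Finally, since the Delsarte--Hoffman bound is an upper bound on the cardinality of any independent set, attaining it forces $D$ to have maximum cardinality among the independent sets of $(J(n,k))_{k-t}$, hence to be a maximal one, which is the assertion (the case $t=k-1$ recovering the previous corollary via $(J(n,k))_{1}=J(n,k)$).

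I expect the only mildly delicate point to be the parameter bookkeeping rather than any genuine mathematical obstacle: Proposition \ref{exerDH} is phrased for a ``$k$-uniform $r$-regular'' hypergraph, but here the letter $k$ already denotes the block size, so one must be explicit that in the present application the uniformity is ${{n-t}\choose{k-t}}$ and the regularity is ${k\choose t}$. Once this substitution is made, everything else is a verbatim reading of Proposition \ref{exerDH} together with the definition of $(J(n,k))_{k-t}$.
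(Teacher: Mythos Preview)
Your proposal is correct and follows exactly the route the paper takes: the paper simply writes ``By Proposition \ref{exerDH} we have'' before stating the corollary, relying on the identifications already made in the text (a $t$-$(n,k,1)$-design is a transversal of $G_{n,k,t}$, and $(J(n,k))_{k-t}$ is $\mathcal{M}_{12}(\mathcal{D}(G_{n,k,t}))$ without loops). Your explicit verification that $G_{n,k,t}$ is ${{n-t}\choose{k-t}}$-uniform and ${k\choose t}$-regular, and your remark on the notational clash of the letter $k$, are welcome clarifications that the paper leaves implicit.
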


It is shown in  \cite{Keev, Glock}, that $t$-$(n,k,\lambda)$-designs
exist for sufficiently large $n$ if their parameters meet the
well-known arithmetic conditions. This guarantees the existence of
perfect $2$-colorings with corresponding parameters.

A matrix $H$ of size $n\times n$ is called the {\it Hadamard matrix}
if it consists of $\pm 1$ and satisfies the equation $HH^*=nI_n$. By
multiplying rows and columns of $H$ by $-1$ we can reduce $H$ to the
Hadamard matrix whose first row and first column consist only of
$1$'s. Each other row or column of such Hadamard matrix contains
exactly equal numbers of $1$ and $-1$. It is well known that the
Hadamard matrices exist only if $n=4m$. Consider a matrix $A_m$ of
size $(4m+3)\times (4m+3)$ obtained by removing the first row and
the first column of the reduced Hadamard matrix and by changing
entries $-1$ by $0$. We can regard all rows of $A_m$ as binary
$(4m+3)$-tuples of weight $2m+1$. It is well known that

\begin{claim}\label{MHad}
The collection of rows of $A_m$ is a design with parameters
$2$-$(4m+3,2m+1,m)$.
\end{claim}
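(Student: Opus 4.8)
The plan is to verify the three properties of a $2$-$(4m+3,2m+1,m)$ design directly from the defining identity $HH^* = nI_n$ of the reduced Hadamard matrix, translated into the language of the $0/1$ matrix $A_m$. Write $H$ in reduced form, so its first row and first column are all $1$'s, and let $\tilde H$ be the $(4m+2)\times(4m+2)$ "core" obtained by deleting the first row and column; then $A_m$ is obtained from the $(4m+3)$-th... — more precisely, $A_m$ has size $(4m+3)\times(4m+3)$ and arises from $H$ of size $4m+4$ by deleting the first row and column and replacing $-1$ by $0$. (I would begin the write-up by fixing this indexing carefully; the off-by-one bookkeeping between $n=4m+4$, the deleted border, and the resulting $(4m+3)$-size matrix is where small errors creep in.) The key algebraic input is that for two distinct non-first rows $u,v$ of $H$ we have $\langle u,v\rangle = 0$, and each such row has $\langle u,u\rangle = n$ with first coordinate $+1$, hence among the remaining $4m+3$ coordinates it has $2m+1$ entries equal to $+1$ and $2m+2$ equal to $-1$; and for distinct non-first rows, orthogonality over all $4m+4$ coordinates together with the shared leading $+1$ forces the number of coordinate positions (among the last $4m+3$) where both are $+1$ to equal $m$.

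Concretely, the step I would carry out is the standard inclusion–exclusion conversion from $\pm1$ inner products to $0/1$ overlaps. If $a,b\in\{0,1\}^{4m+3}$ are the rows of $A_m$ corresponding to non-first rows $u,v$ of $H$ (restricted to the last $4m+3$ coordinates), then $u_i v_i = 1 - 2a_i - 2b_i + 4a_ib_i$ when $i$ ranges over those coordinates... let me instead use $u_i = 1-2a_i$, so $u_i v_i = (1-2a_i)(1-2b_i) = 1 - 2a_i - 2b_i + 4 a_i b_i$. Summing over the $4m+3$ relevant coordinates and using $\sum u_i = \sum_i(1-2a_i) = (4m+3) - 2(2m+1) = 1$ for each row (this recovers $\wt(a)=2m+1$), together with $\sum_i u_i v_i = -1$ (the full inner product over $4m+4$ coordinates is $0$ and the first coordinate contributes $+1$), I get $-1 = (4m+3) - 2(2m+1) - 2(2m+1) + 4\,|\supp(a)\cap\supp(b)|$, which solves to $|\supp(a)\cap\supp(b)| = m$. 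This shows every pair of distinct rows of $A_m$, viewed as $(2m+1)$-subsets of a $(4m+3)$-element set, meets in exactly $m$ points.

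To finish, I must upgrade "every pair of blocks meets in $m$ points" to the design property "every $2$-subset lies in exactly $m$ blocks." This is a counting argument: there are $4m+3$ blocks, each of size $2m+1$, on $4m+3$ points, so by symmetry (the symmetric $2$-design / double counting of incident point–block and of flag-pairs) every point lies in $2m+1$ blocks (count $\sum_{\text{blocks}}\wt = (4m+3)(2m+1) = (4m+3)\cdot r \Rightarrow r = 2m+1$), and then for a fixed point $x$ one counts pairs $(y,B)$ with $x,y\in B$, $y\neq x$: on one hand it is $\lambda(4m+2)$, on the other it is $\sum_{B\ni x}(|B|-1) = (2m+1)(2m)$, forcing $\lambda = m$. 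The only genuine obstacle is making sure the $\pm1$-to-$0/1$ translation is done with the correct coordinate count — that the deleted first coordinate is exactly the one carrying the shared $+1$ — and that the two rows of $H$ in question are indeed both non-first rows (which is automatic since $A_m$ is formed from the non-first rows); once the indexing is pinned down, every step is a short arithmetic identity and an application of the elementary theory of symmetric designs.
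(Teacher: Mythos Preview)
The paper does not actually prove this proposition; it is stated as well known, with no argument given. Your approach is the standard one and the core computation is right, but there are two issues worth flagging.

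First, a harmless slip: with the paper's convention ($-1\mapsto 0$, $+1\mapsto 1$) the correct substitution is $u_i=2a_i-1$, not $u_i=1-2a_i$. Under your substitution the row weight would come out $2m+2$, contradicting the $2m+1$ you (correctly) obtained in the previous paragraph. The intersection calculation survives because $(1-2a)(1-2b)=(2a-1)(2b-1)$, so the product formula is sign-insensitive; just fix the sign when you write it up.

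Second, a real gap: from ``any two \emph{blocks} meet in exactly $m$ points'' you want ``any two \emph{points} lie in exactly $m$ blocks.'' Your double counts
\[
\sum_{\text{blocks}}|B|=(4m+3)(2m+1),\qquad \sum_{y\neq x}\lambda_{xy}=r_x(k-1)
\]
only give the \emph{averages} $\bar r=2m+1$ and $\bar\lambda=m$; they do not by themselves force $r_x$ or $\lambda_{xy}$ to be constant. You need one more ingredient. The cleanest fix here is to note that $H^*$ is also a reduced Hadamard matrix, so the identical $\pm1$-to-$0/1$ computation applied to \emph{columns} of $H$ shows directly that any two columns of $A_m$ share exactly $m$ ones, i.e.\ any two points lie in exactly $m$ blocks. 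Alternatively, invoke the standard Ryser-type argument: from $A_mA_m^{*}=(m+1)I+mJ$ one gets $A_m$ invertible, hence $A_m^{*}\mathbf{1}=(2m+1)\mathbf{1}$ (constant replication) and then $A_m^{*}A_m=A_m^{-1}(A_mA_m^{*})A_m=(m+1)I+mJ$ as well, which is exactly the constancy of $\lambda$. Either way, say which fact you are using; the bare double count is not enough.
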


\begin{corollary}
The Hadamard matrices are equivalent to perfect $2$-colorings of the
corresponding multigraph  with certain quotient matrix.
\end{corollary}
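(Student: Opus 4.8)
The plan is to obtain the statement by composing two equivalences that are already available: Proposition~\ref{MHad}, which ties Hadamard matrices to $2$-$(4m+3,2m+1,m)$-designs, and Corollary~\ref{c:design21}, which ties designs to perfect $2$-colorings of the multigraphs $(J(n,k))_{k-t}$.

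First I would upgrade the correspondence between Hadamard matrices and designs to a genuine equivalence. Proposition~\ref{MHad} assigns to each reduced Hadamard matrix of order $4(m+1)$ the matrix $A_m$, whose rows form a $2$-$(4m+3,2m+1,m)$-design. A parameter count shows this design is symmetric: with $v=4m+3$, $k=2m+1$, $\lambda=m$ one gets $b=\lambda\binom{v}{2}/\binom{k}{2}=4m+3=v$ blocks. For the reverse direction I would invoke the classical construction: from the incidence matrix $N$ of such a symmetric design one forms $2N-J$ (replacing each $0$ by $-1$), and bordering the result with a first row and column of $1$'s produces a matrix $H$ satisfying $HH^{*}=(4m+4)I$, i.e.\ a reduced Hadamard matrix. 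Hence, up to the negations of rows and columns used to normalize $H$, Hadamard matrices correspond bijectively to these designs.

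Next I would feed these parameters into Corollary~\ref{c:design21} with $n=4m+3$, $k=2m+1$, $t=2$, $\lambda=m$. It states exactly that a $2$-$(4m+3,2m+1,m)$-design is equivalent to a perfect $2$-coloring of $(J(4m+3,2m+1))_{2m-1}$ whose quotient matrix is the substitution instance of the matrix displayed there; evaluating $\binom{2m+1}{2}$ and $\binom{2m+2}{2m-1}$ then exhibits the ``certain quotient matrix'' named in the statement. Composing this with the equivalence between Hadamard matrices and designs from the previous step gives the corollary.

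The step I expect to require the most care is making the word ``equivalent'' precise in the second paragraph: the passage from $H$ to $A_m$ discards a row and column and fixes a sign normalization, so the map is a bijection only after one agrees to identify Hadamard matrices that differ by negating rows or columns (equivalently, after restricting to reduced Hadamard matrices). Once this convention is fixed, nothing further is needed beyond chaining the two cited results.
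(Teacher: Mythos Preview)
Your approach is exactly the one the paper has in mind: the corollary is stated without proof immediately after Proposition~\ref{MHad}, and it is meant to follow by composing that proposition with Corollary~\ref{c:design21}. Your additional care in spelling out the reverse direction (from a symmetric $2$-$(4m+3,2m+1,m)$-design back to a reduced Hadamard matrix) and in clarifying that ``equivalent'' is to be read modulo row/column negations is appropriate, since the paper leaves this implicit.
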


Let $q$ be a prime power. Consider an $n$-dimensional vector  space
$X_q^n$ over the Galois field $X_q=GF(q)$. The {\it Grassmann graph}
is a graph whose vertices are $k$-dimensional subspaces of $X_q^n$
and two vertices are joined by an edge whenever the corresponding
subspaces are intersected by a $(k-1)$-dimensional subspace.

A $q$-{\it design} or {\it subspace design} with parameters
$t$-$(n,k,\lambda)_q$ is a such collection of  $k$-dimensional
vector subspaces in $X_q^n$ that each  $t$-dimensional subspace is
included into exactly   $\lambda$ $k$-dimensional subspaces from
this collection. The subspace designs are  $q$-analogs of
combinatorial designs. By a similar way a subspace design with
parameters $t$-$(n,k,\lambda)_q$ can be regarded as a $\lambda$-fold
transversal in the hypergraph $G^q_{n,k,t}$. Vertices of
$G^q_{n,k,t}$ are all possible $k$-dimensional subspaces in $X_q^n$
and hyperedges of $G^q_{n,k,t}$ are collections of subspaces which
include a fixed $t$-dimensional subspace.  In the case $t=k-1$, the
multigraph $\mathcal{M}_{12}(\mathcal{D}(G^q_{n,k,t}))$ without
loops coincides with  $J_q(n,k)$.

Similar to the case of combinatorial designs above,  we obtain the
next corollary of Proposition \ref{bipartity1}.

\begin{corollary}\label{c:design23}
A set $D$ is a subspace design with parameters $t$-$(n,k,\lambda)_q$
if and only if the indicator function of $D$ is a perfect
$2$-coloring of $\mathcal{M}_{12}(\mathcal{D}(G^q_{n,k,t}))$ with
quotient matrix $\left(\begin{array}{cc}
{k\brack t}_q(\lambda-1)& {k\brack t}_q({{n-k}\brack{k-t}}_q-\lambda+1) \\
{k\brack t}_q\lambda&  {k\brack t}_q({{n-k}\brack{k-t}}_q-\lambda)\\
\end{array} \right)$, where ${k\brack
t}_q=\frac{(q^k-1)\cdots(q^{k-t+1}-1)}{(q^t-1)\cdots(q-1)}$ are the
Gaussian binomial coefficients.
\end{corollary}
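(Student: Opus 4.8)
The plan is to obtain this statement as the $q$-analog of Corollary \ref{c:design21}, by applying Proposition \ref{bipartity1} to the hypergraph $G^q_{n,k,t}$. First I would make explicit that $G^q_{n,k,t}$ is a uniform regular hypergraph: its vertices are the $k$-dimensional subspaces of $X_q^n$, and for each $t$-dimensional subspace $W$ there is exactly one hyperedge, the collection of all $k$-dimensional subspaces containing $W$. Every hyperedge has the same cardinality, namely the number of $(k-t)$-dimensional subspaces of $X_q^n/W$, which is ${n-t\brack k-t}_q$; and every vertex lies in the same number of hyperedges, namely the number of $t$-dimensional subspaces of a fixed $k$-dimensional subspace, which is ${k\brack t}_q$. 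Thus $G^q_{n,k,t}$ is uniform with hyperedge size ${n-t\brack k-t}_q$ and regular with degree ${k\brack t}_q$, so Proposition \ref{bipartity1} applies with these two numbers playing the roles of $k$ and $r$ there.

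Next I would identify the combinatorial objects. A set $D$ of $k$-dimensional subspaces covers the hyperedge indexed by $W$ exactly $\lambda$ times precisely when exactly $\lambda$ members of $D$ contain $W$; hence $D$ is a $\lambda$-fold transversal of $G^q_{n,k,t}$ if and only if $D$ is a subspace design with parameters $t$-$(n,k,\lambda)_q$. By Proposition \ref{bipartity1} this is equivalent to $\chi_D$ being a perfect $2$-coloring of $\mathcal{M}_{12}(\mathcal{D}(G^q_{n,k,t}))$ with the quotient matrix furnished by Proposition \ref{bipartity1} on the multigraph \emph{with} loops. To reach the matrix in the statement I would then pass to the loop-free multigraph exactly as in the set case: the diagonal entry of $YY^*$ at a vertex counts the $t$-dimensional subspaces of the corresponding $k$-dimensional subspace, so the diagonal of the adjacency matrix is the constant ${k\brack t}_q$; deleting the loops therefore just subtracts ${k\brack t}_q I$ from the adjacency matrix, hence subtracts ${k\brack t}_q$ from each diagonal (``same-color'') entry of the quotient matrix. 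Combined with the defining property of the design this produces the matrix in the statement, which is the Gaussian binomial analog of the one in Corollary \ref{c:design21}; for $t=k-1$ the loop-free multigraph is $J_q(n,k)$ and one recovers the already-known statement.

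Since all the structural content is inherited verbatim from Proposition \ref{bipartity1}, I do not expect a real obstacle. The one point requiring care is the $q$-analog bookkeeping: verifying that $G^q_{n,k,t}$ really is uniform and regular — that is, that those two subspace counts do not depend on the chosen subspace — and that the loop deletion together with the design property gives exactly the claimed entries, with the Gaussian binomial manipulations running parallel to the ordinary-binomial ones behind Corollary \ref{c:design21}.
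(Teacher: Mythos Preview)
Your proposal is correct and follows exactly the route the paper intends: the paper's own ``proof'' is just the sentence ``Similar to the case of combinatorial designs above, we obtain the next corollary of Proposition~\ref{bipartity1}'', and your write-up is precisely the unpacking of that sentence---verify that $G^q_{n,k,t}$ is ${n-t\brack k-t}_q$-uniform and ${k\brack t}_q$-regular, identify $t$-$(n,k,\lambda)_q$ designs with $\lambda$-fold transversals, apply Proposition~\ref{bipartity1}, and strip the loops. One bookkeeping remark: the quotient matrix as printed is the loop-free one (the $(1,1)$ entry is ${k\brack t}_q(\lambda-1)$, not ${k\brack t}_q\lambda$), so when you subtract ${k\brack t}_q I$ you should check the off-diagonal entries against the hyperedge size ${n-t\brack k-t}_q$ you computed---the printed ${n-k\brack k-t}_q$ appears to be a typo carried over from Corollary~\ref{c:design21}, not something you need to reconcile by an extra identity.
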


{\it Spreads} are subspace designs  with parameters $1$-$(n,k,1)_q$.
Constructions of spreads and subspace designs with $\lambda>1$ are
studied for a long time. The first subspace design with  $t>1$ and
$\lambda=1$ was found recently. This is the subspace design with
parameters $2$-$(13,3,1)_2$, in other words, this is a $2$-analog of
the Steiner triple system \cite{BEW}.

\section{Difference Sets and Bent Functions}

Let $K$ be a finite abelian group. A  {\it partial difference set}
     with parameters $(v,k,\lambda,\mu)$ is the set $D\subseteq K$,
    $|K|=v$, $|D|=k$, where for each nonzero $ a\in D$ there
    exist exactly $\lambda$ pairs $d_1, d_2\in D$ such that
    $d_1-d_2=a$, and  for each nonzero  $ a\in K\setminus D$ there
    exist exactly
     $\mu$ pairs $d_1, d_2\in D$ such that
    $d_1-d_2=a$.
In the case $\mu=\lambda$, every partial difference set  $D\subseteq
K$ is called simply a {\it difference set}
     with parameters $(v,k,\lambda)$.

A {\it convolution} of functions $f,g:K\rightarrow \mathbb{R}$ is
defined by the equation  $f*g(y)=\sum\limits_{x\in K}f(x)g(y-x)$.
Denote by $\delta:K\rightarrow \mathbb{R}$ the function taking the
value  $|K|$ in the  identity element of $K$ and $0$ in the other
elements. Let $\mathbf{1}$ be the function that takes value $1$
identically.
 It is known  the following characterization of
 partial difference sets, see \cite{Ma} for instance.

\begin{claim}\label{c:diffset}
A set $D\subseteq K$ is a partial difference set with parameters
$(v,k,\lambda,\mu)$ if and only if
 $\chi_{_D}
*\chi_{_{-D}}=\lambda\cdot\chi_{_D}+\mu\cdot(\mathbf{1}-\chi_{_D})+
(k-\mu)\delta$.
\end{claim}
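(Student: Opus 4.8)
The plan is to prove the equivalence in Proposition \ref{c:diffset} by translating the combinatorial definition of a partial difference set directly into an identity of functions on $K$, evaluating the convolution $\chi_{_D}*\chi_{_{-D}}$ at an arbitrary element $y\in K$ and comparing with the right-hand side pointwise. First I would compute, for fixed $y$, the value $(\chi_{_D}*\chi_{_{-D}})(y)=\sum_{x\in K}\chi_{_D}(x)\chi_{_{-D}}(y-x)$; since $\chi_{_{-D}}(y-x)=\chi_{_D}(x-y)$, this equals the number of ordered pairs $(d_1,d_2)\in D\times D$ with $d_1-d_2=y$, i.e.\ the number of representations of $y$ as a difference of two elements of $D$. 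Call this quantity $N(y)$.

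Next I would split into the three cases that appear on the right-hand side. If $y=0$ (the identity of $K$), then $N(0)=|D|=k$, and on the right-hand side $\lambda\chi_{_D}(0)+\mu(\mathbf 1-\chi_{_D})(0)+(k-\mu)\delta(0)$; here $\delta(0)=|K|=v$... wait, that is not right, so in fact the cleanest bookkeeping is to note $\delta(0)=|K|$, which would overshoot, so I must be careful: the intended reading is presumably that $(k-\mu)\delta$ contributes $(k-\mu)$ at the identity after the normalization built into $\delta$, so I would double-check the normalization of $\delta$ against the claimed identity and, if $\delta$ is genuinely defined with value $|K|$ at the identity, interpret the statement accordingly (most likely the coefficient is chosen so that at $0$ the total is $\lambda\cdot[0\in D]+\mu\cdot[0\notin D]+(k-\mu)$, which equals $k$ exactly when $0\in D$ gives $\lambda+k-\mu$ and this must equal $k$, forcing consistency only via the convention; I would state the convention explicitly and verify the arithmetic). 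For $y\neq 0$ with $y\in D$, the definition of a partial difference set gives $N(y)=\lambda$, matching $\lambda\chi_{_D}(y)$ since the other two terms vanish ($\delta(y)=0$, $(\mathbf 1-\chi_{_D})(y)=0$). For $y\neq 0$ with $y\notin D$, the definition gives $N(y)=\mu$, matching $\mu(\mathbf 1-\chi_{_D})(y)$ while the first and third terms vanish. This establishes the forward direction.

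For the converse, I would simply run the same pointwise evaluation backwards: assuming the functional identity holds, evaluating at each $y\neq0$ in $D$ forces the number of difference representations to be $\lambda$, evaluating at each $y\neq0$ outside $D$ forces it to be $\mu$, and evaluating at $0$ (together with $\sum_y N(y)=k^2$ or directly) pins down $|D|=k$; the ambient group order $v=|K|$ is part of the data. Hence $D$ is a partial difference set with the stated parameters.

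The main obstacle here is not conceptual but bookkeeping: getting the normalization of $\delta$ and the coefficient $k-\mu$ exactly consistent at the identity element, and handling correctly the possibility that an element and its negative coincide (elements of order $\le 2$), which is why one works with \emph{ordered} pairs $(d_1,d_2)$ and the function $\chi_{_{-D}}$ rather than with unordered difference multisets. Once the translation $(\chi_{_D}*\chi_{_{-D}})(y)=\#\{(d_1,d_2)\in D\times D:d_1-d_2=y\}$ is in place, the rest is a routine three-case check, so I would present that translation carefully and then treat the remaining verification as immediate.
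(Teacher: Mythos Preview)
The paper does not give its own proof of this proposition: it is stated as a known characterization with a reference to Ma's survey \cite{Ma}. So there is nothing to compare against; your pointwise-evaluation argument is precisely the standard one, and the three-case split ($y=0$; $y\neq 0$, $y\in D$; $y\neq 0$, $y\notin D$) together with the identity $(\chi_{_D}*\chi_{_{-D}})(y)=\#\{(d_1,d_2)\in D\times D:\ d_1-d_2=y\}$ is exactly how this is proved.

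Your hesitation about the term at $y=0$ is well founded, and you should not paper over it: with the paper's own definition $\delta(0)=|K|$, the identity as written is simply false at $y=0$ (the left side is $k$, while the right side picks up a summand of order $(k-\mu)|K|$). The intended $\delta$ is the indicator of the identity element, $\delta(0)=1$ and $\delta(y)=0$ for $y\neq 0$; this is the convolution identity and is the function that appears in the standard group-ring formulation $D\cdot D^{(-1)}=(\lambda-\mu)D+(k-\mu)\cdot 1+\mu\cdot K$ in $\mathbb Z[K]$. With that convention the right-hand side at $y=0$ equals $\mu+(k-\mu)=k$ when $0\notin D$ (the usual hypothesis for partial difference sets, tacit here) and only equals $k$ for $0\in D$ in the degenerate case $\lambda=\mu$. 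So state explicitly that you read $\delta$ as the indicator of $\{0\}$ and assume $0\notin D$; then your computation goes through without further caveats, and the converse is, as you say, the same calculation read backwards.
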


Let $D$ be a difference set. Consider the matrix
$Q_{_D}(x,y)=\chi_{_D}(x-y)$.
 By the definition, every pair of columns and every pair of rows of $Q_{_D}$
 contains the same number of pairs $(1,1)$ as well as the other pairs
  $(0,1),(1,0),(0,0)$. If we regard the rows of $Q_{_D}$ as an
  indicator function of blocks, then the collection of the rows is a
   $2$-$(v,k,\lambda)$-design. If, as in this case, the cardinality of
    $2$-design is equal to its
    length, then the design is called {\it symmetric}.

As a definition of {\it Boolean bent function} $b:\{0,1\}^n
\rightarrow \{0,1\}$ we can take the following property: the
convolution $(-1)^b*(-1)^b$ takes the value  $2^n$ in the zero
vector and zeros in the remaining arguments. It is clear that
$(-1)^b=\mathbf{1}-2b$. By Proposition \ref{c:diffset} we obtain
that  bent functions correspond to  difference sets in
$\mathbb{Z}_2^n$. It is known (see \cite{Mes0} for instance) that
bent functions exist if and only if $n$ is even and the number of
values $1$ of a bent function is equal to either $2^{n-1}+2^{n/2-1}$
or $2^{n-1}-2^{n/2-1}$. Moreover, the following statement holds.

\begin{claim}\label{cor:bent_diffset} Let $b$ be a Boolean bent function and let
 $B=\{x\in\{0,1\}^n\mid b(x)=1 \}$, i.\,e., $b=\chi_{_B}$. Then $B$ is a
 difference set with parameters either
$(2^n,2^{n-1}-2^{n/2-1},2^{n-2}-2^{n/2-1})$ or
$(2^n,2^{n-1}+2^{n/2-1},2^{n-2}+2^{n/2-1})$. The converse statement
is also true.
\end{claim}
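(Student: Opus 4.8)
The plan is to translate both directions into the single convolution identity that defines bentness, and then to match that identity term-by-term against the characterization of (partial) difference sets in Proposition~\ref{c:diffset}. The crucial structural feature of the ambient group $K=\mathbb{Z}_2^n$ is that $-x=x$ for every $x$, so $-B=B$ and $\chi_{B}=\chi_{-B}$; consequently Proposition~\ref{c:diffset} applies with its two convolution factors identified, and it then reads: $B\subseteq\mathbb{Z}_2^n$ is a partial difference set with parameters $(2^n,k,\lambda,\mu)$, where $k=|B|$, if and only if $\chi_{B}*\chi_{B}$ equals $k$ at the origin, equals $\lambda$ at the nonzero points of $B$, and equals $\mu$ at the points outside $B$.

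First I would put the bentness condition into the same shape. Writing $(-1)^b=\mathbf{1}-2\chi_{B}$ and using bilinearity of convolution together with $\mathbf{1}*\mathbf{1}=2^n\mathbf{1}$ and $\mathbf{1}*\chi_{B}=k\,\mathbf{1}$, one obtains $(-1)^b*(-1)^b=(2^n-4k)\,\mathbf{1}+4\,\chi_{B}*\chi_{B}$. Hence $b$ is bent, that is, $(-1)^b*(-1)^b$ equals $2^n$ at the origin and $0$ elsewhere, exactly when $\chi_{B}*\chi_{B}$ equals $k-2^{n-2}$ at every nonzero argument (its value $k$ at the origin being automatic, since it always equals $|B|$). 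In particular, when $b$ is bent the function $\chi_{B}*\chi_{B}$ is \emph{constant} on the set of nonzero arguments.

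Comparing with the first paragraph, this constancy forces $\lambda=\mu=k-2^{n-2}$, so $B$ is not merely a partial difference set but an honest difference set with parameters $(2^n,\,k,\,k-2^{n-2})$; conversely, any difference set with these parameters satisfies the convolution identity above and hence makes $b$ bent. It remains only to identify $k$: the cited fact that a bent function forces $n$ even and $|B|\in\{2^{n-1}-2^{n/2-1},2^{n-1}+2^{n/2-1}\}$, substituted into $\lambda=k-2^{n-2}$, gives precisely the two triples in the statement, while for the converse one simply reads $\lambda=k-2^{n-2}$ off the given parameters. (If one prefers to avoid the external reference, the standard counting identity $k(k-1)=\lambda(2^n-1)$ for a difference set, combined with $\lambda=k-2^{n-2}$, already yields $k=2^{n-1}\pm2^{n/2-1}$, which is an integer only when $n$ is even.)

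There is no deep obstacle here; the whole content is the matching of two linear expansions of a convolution. The one genuinely fiddly point is the normalization of the Dirac-type function $\delta$ when invoking Proposition~\ref{c:diffset}: one must keep straight whether it denotes the $\{0,1\}$-valued indicator of the identity or that indicator scaled by $|K|=2^n$, so that the coefficient of the origin term produced by the bent identity (namely $4(k-\lambda)=2^n$ times the $\{0,1\}$-indicator) is aligned correctly with the coefficient $(k-\mu)$ appearing in the proposition. Once that bookkeeping is pinned down, every equality in the argument is a one-line computation.
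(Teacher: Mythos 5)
Your argument is correct and is essentially the route the paper intends: the paper states this proposition without proof, citing the convolution characterization of Proposition~\ref{c:diffset} together with the known support sizes $2^{n-1}\pm2^{n/2-1}$, and your expansion $(-1)^b*(-1)^b=(2^n-4k)\mathbf{1}+4\,\chi_{B}*\chi_{B}$ plus the identification $\lambda=\mu=k-2^{n-2}$ (or, self-containedly, $k(k-1)=\lambda(2^n-1)$) fills in exactly that computation in both directions. Your remark about the normalization of $\delta$ is well taken, since as literally stated in the paper (value $|K|$ at the identity) the formula of Proposition~\ref{c:diffset} is off at the identity element, and the $\{0,1\}$-indicator reading you adopt is the consistent one.
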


The parameters of the difference set in Proposition
\ref{cor:bent_diffset} is called the {\it McFarland parameters}
\cite{MF}. It is easy to prove that each difference set with the
McFarland parameters is possible to transform into some Hadamard
matrix.

A connected graph $G$ is  {\it strongly regular} with parameters
$(v,k,\lambda,\mu)$, whatever $|V(G)|=v$, the degree of each vertex
equals  $k$, each two adjacent vertices have $\lambda$ common
neighbors, and each two non-adjacent vertices have $\mu$ common
neighbors.
For a group $K$ we consider a  set  $A\subset K$ of generator such
that
 $A^{-1}=A$ and $\varepsilon\notin A$, where $\varepsilon$ is the identity element of $K$.
The {\it Cayley graph} $Cay(K,A)$ is the graph whose vertices are
the elements of $K$ and two vertices  $x,y\in K$ are joined by an
edge if $y=xa$ for some $a\in A$. The conditions $A^{-1}=A$ and
$\varepsilon\notin A$ imply that $Cay(K,A)$ is a simple graph.

Consider the Cayley graph of an abelian group $K$ with  a set $D$ of
generators such that $D=-D$ and $0\not \in D$. The definition of a
partial difference set with parameters  $(v,k,\lambda,\mu)$ implies
that the graph $Cay(K,D)$ is  strongly regular  with parameters
$(v,k,\lambda,\mu)$. Indeed, vertices $x,y\in K$ are adjacent in
$Cay(K,D)$ if and only if  $x+a=y$ for some  $a\in D$. Thus, by the
definition of partial difference set, there exist exactly $\lambda$
pairs $d_1,d_2\in D$ such that  $a=d_1-d_2$. Consequently, there
exist exactly $\lambda$  vertices  $b$, $b=x+d_1=y+d_2$ adjacent to
both vertices $x$ and $y$ in $Cay(K,D)$. By a similar way we can
consider the case of non-adjacent vertices $x,y\in K$. It is easy to
see that the converse statement is true, i.\,e.,   if  $Cay(K,D)$ is
a strongly regular graph then $D$ is a partial difference set in
 $K$.

As we showed above, each difference set $D$
 with parameters $(v,k,\lambda)$ corresponds to a symmetric  $2$-$(v,k,\lambda)$-design.
 If $D=-D$ and $0\not \in
D$ then  $Q_{_D}(x,y)=\chi_{_D}(x-y)$ is the adjacency matrix of
$Cay(K,D)$. The converse statement is also true: if a matrix $A$ is
symmetric and has only zeros on the main diagonal, and rows of $A$
are blocks of a $2$-$(v,k,\lambda)$-design, then $A$ is the
adjacency matrix of some strongly regular graph with parameters
$(v,k,\lambda,\lambda)$.

\section{Difference Sets, Strongly Regular Graphs and Bent Functions as Perfect Colorings
}

Further  we will show that each strongly regular graph and, in
particular, each partial difference set corresponds to a perfect
$2$-coloring of some hypergraph.

Consider the complete graph $K_n$ on $n$ vertices. Define a
hypergraph $\Gamma_n$ whose vertices are edges of $K_n$ and a triple
of vertices constitutes a hyperedge wherever these vertices generate
a triangle in $K_n$. Let $G$ be a graph on the same set of vertices.
Define a coloring  of $\Gamma_n$ as follows: $f(e)=1$ if $e\in E(G)$
and $f(e)=0$ if $e\not \in E(G)$. The coloring $f$ is perfect if and
only if $G$ is strongly regular. In order to see this we need to
verify the property: for each two vertices $u$ and $v$ the numbers
of vertices adjacent  to either $u$ or $v$, adjacent to both  $u$
and $v$, and adjacent to neither $u$ no $v$ in $G$ depend only on
the adjacency of $u$ and $v$ in $G$. This condition coincides with
the definition of a strongly regular graph. In particular, for the
strongly regular graph $G$ with parameters $(n,k,\lambda,\mu)$ we
obtain that if $u$ and $v$ are adjacent in $G$, then the number of
vertices which are adjacent to both vertices $u$ and $v$ equals
$\lambda$; the number of vertices which are adjacent to either $u$
or $v$ equals $k-\lambda$; the number of vertices which are not
adjacent to $u$ or $v$  equals $v-2-2k+\lambda$.

Next we will consider the abelian group $\mathbb{Z}_2^n$. Denote by
$\Delta_n$ a $3$-uniform hypergraph whose vertices are elements of
$\mathbb{Z}_2^n$ without $0$ and  triples $\{a_1, a_2, a_3\}$ are
hyperedges wherever $a_1+a_2+a_3=0$. Notice that the equality
$a_1+a_2+a_3=0$ implies that all three elements $a_i\neq 0$,
$i=1,2,3$ are different. If $D$ is a partial difference set
$D\subset \mathbb{Z}_2^n$ with parameters $(v,k,\lambda,\mu)$,
 then $\chi_{_{D}}$ is a perfect $2$-coloring of
$\Delta_n$. Indeed,  the equation $a_1+a_2+a_3=0$ is equivalent to
the equation $a_1=a_2-a_3$ in $\mathbb{Z}_2^n$. Then each vertex
$a_1\in D$ is incident to $\lambda$ hyperedges containing three
elements of $D$ and each vertex $a_1\not\in D$ is incident to $\mu$
hyperedges containing two elements of $D$. Since every pair $a,b\in
\mathbb{Z}_2^n\setminus \{0\}$ belongs to exactly one hyperedge, we
can calculate the numbers of hyperedges containing a vertex $a\in D$
 or $a\not \in D$ for all  color compositions of hyperedges. These numbers depend
 only on the color of the vertex. Thus,
 $\chi_{_{D}}$ is a perfect $2$-coloring by the definition.
 The converse statement is also true. Let $D$ be the set of $1$ colored vertices.
  It is sufficient to observe
 that the parameter $\lambda$ of the partial difference set $D$ is equal
 to the number of hyperedges of the color composition $(1,1,1)$
 which contain a fixed vertex of color $1$, and the parameter $\mu$  is equal
 to the number of hyperedges of the color composition $(0,1,1)$
 which contain a fixed vertex of color $0$.

Therefore, the next statement follows from Proposition
\ref{cor:bent_diffset}.

\begin{claim}\label{avgdop}
Boolean bent functions one-to-one correspond to perfect
$2$-colorings of $\Delta_n$ with certain quotient matrix.
\end{claim}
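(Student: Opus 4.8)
The plan is to derive Proposition~\ref{avgdop} by composing two bijections that are already in place. The first is the map $b\mapsto B=\{x:b(x)=1\}$, which by Proposition~\ref{cor:bent_diffset} is a bijection between Boolean bent functions on $\{0,1\}^n$ and difference sets $B\subseteq\mathbb{Z}_2^n$ carrying one of the two McFarland triples $(2^n,2^{n-1}\mp2^{n/2-1},2^{n-2}\mp2^{n/2-1})$. The second is the map $D\mapsto\chi_D$ described in the paragraph preceding the statement, which sends a partial difference set $D\subseteq\mathbb{Z}_2^n\setminus\{0\}$ to the perfect $2$-coloring of $\Delta_n$ whose class of $1$-colored vertices is $D$; there it is shown that this is a correspondence between partial difference sets and perfect $2$-colorings of $\Delta_n$, and that the parameters of $D$ are visible in the coloring, namely $k=|D|$, $\lambda$ is the number of hyperedges of composition $(1,1,1)$ through a $1$-vertex, and $\mu$ the number of hyperedges of composition $(0,1,1)$ through a $0$-vertex.

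First I would spell out that these numbers are exactly what the quotient matrix of the coloring records: the quotient matrix of the induced perfect coloring of $\mathcal D(\Delta_n)$ contains, among its entries, the incidence numbers of each vertex colour with each hyperedge composition, and these, together with $|V(\Delta_n)|=2^n-1$, determine and are determined by $(k,\lambda,\mu)$. Substituting the two McFarland triples into the resulting formula for the quotient matrix yields two explicit matrices $S_-$ and $S_+$ (the one attached to a bent function and the one attached to its complement). Thus a perfect $2$-coloring of $\Delta_n$ has quotient matrix $S_\pm$ if and only if its associated partial difference set has $\lambda=\mu$ equal to a McFarland value and $|D|$ the corresponding value, i.e. is a difference set with McFarland parameters.

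Putting this together, $b$ is a bent function iff $B=b^{-1}(1)$ is a difference set with McFarland parameters (Proposition~\ref{cor:bent_diffset}) iff $\chi_B$ is a perfect $2$-coloring of $\Delta_n$ with quotient matrix $S_+$ or $S_-$; and the composition is one-to-one because $b\mapsto B$ and $D\mapsto\chi_D$ are each injective and, by the previous paragraph, every perfect $2$-coloring with quotient matrix $S_\pm$ arises from such a $B$. The only genuine computation is the passage from ``has quotient matrix $S$'' to ``has parameters $(k,\lambda,\mu)$'': one writes the relevant entries of the quotient matrix as explicit functions of $2^n-1,k,\lambda,\mu$ by a short count of hyperedges of each composition through a vertex of each colour, and checks that this assignment is injective, so that prescribing $S=S_\pm$ really picks out the McFarland parameters and nothing else. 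I expect this bookkeeping, together with the remark that ``a certain quotient matrix'' must be read as ``one of the two matrices $S_+,S_-$'' (both occurring for every even $n$, since complementation exchanges them), to be the main point requiring care; everything else is immediate from the cited results.
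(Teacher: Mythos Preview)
Your proposal is correct and takes essentially the same approach as the paper: compose the bijection of Proposition~\ref{cor:bent_diffset} with the correspondence between partial difference sets in $\mathbb{Z}_2^n$ and perfect $2$-colorings of $\Delta_n$ established in the paragraph immediately preceding the statement. The paper's own argument is even terser---it simply records that the claim follows from Proposition~\ref{cor:bent_diffset}---whereas you spell out the bookkeeping about reading $(k,\lambda,\mu)$ from the quotient matrix, which the paper leaves implicit.
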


 By Proposition \ref{cl:edge_coloring}, a perfect
$2$-coloring of $\Delta_n$ induces a perfect coloring of the line
graph $\mathcal{E}(\Delta_n)$. Since an intersection of every two
hyperedges of $\Delta_n$ consists of at most one vertex,
$\mathcal{E}(\Delta_n)$ is a simple graph. Notice that a
$2$-coloring $\chi_{_{D}}$ of $\Delta_n$ induces a $4$-coloring of
$\mathcal{E}(\Delta_n)$ because each hyperedge consists of three
vertices and, consequently,  it can  contain
 $0$,
$1$, $2$ or $3$ vertices of the first color. Consider  elements of
$\mathbb{Z}_2^n$ as elements of the vector space $X_2^n$. It is easy
to see that hyperedges of $\Delta_n$ one-to-one correspond to
$2$-dimensional subspaces of $X_2^n$. Furthermore, two hyperedges of
$\Delta_n$ have common vertex if and only if the corresponding
subspaces meet along a $1$-dimensional subspace. Thus the graph
$\mathcal{E}(\Delta_n)$ is equivalent to the Grassmann graph
$J_2(n,2)$ by the definition.

By Proposition  \ref{cor:bent_diffset}, we obtain that a Boolean
bent function generates a difference set in $\mathbb{Z}_2^n$. As
proved above, this set induces a perfect $4$-coloring of $J_2(n,2)$.
In the following theorem we prove that the converse holds: each
perfect coloring of $J_2(n,2)$ with certain quotient matrix
determines a Boolean bent function. We consider only the case as
bent functions with $2^{n-1}+2^{n/2-1}$ values $1$, the case as bent
functions with $2^{n-2}-2^{n/2-1}$ values $1$ is  similar.

\begin{theorem}\label{avg}
Bent functions $b:\{0,1\}^n\rightarrow \{0,1\}$ with
$|\supp(b)|=2^{n-1}+2^{n/2-1}$ and $b(\bar 0)=1$ one-to-one
correspond to perfect colorings of
 $J_2(n,2)$ with quotient matrix\\
$\begin{pmatrix}
3(2^{n-3}-2^{\frac{n}{2}-2}-1)& 3\cdot2^{n-2}-3 & 3(2^{n-3}+2^{\frac{n}{2}-2}) & 0\\
2^{n-2}-2^{\frac{n}{2}-1} & 5\cdot2^{n-3}-2^{\frac{n}{2}-2}-5& 2^{n-1}+2^{\frac{n}{2}-1} & 2^{n-3}+2^{\frac{n}{2}-2}-1\\
2^{n-3}-2^{\frac{n}{2}-2} & 2^{n-1}-2^{\frac{n}{2}-1}-1 & 5\cdot2^{n-3}+2^{\frac{n}{2}-2}-3 & 2^{n-2}+2^{\frac{n}{2}-1}-2 \\
0 & 3(2^{n-3}-2^{\frac{n}{2}-2}) & 3\cdot2^{n-2} & 3(2^{n-3}+2^{\frac{n}{2}-2}-2)\\
\end{pmatrix}$.
\end{theorem}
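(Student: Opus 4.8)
The plan is to verify both directions of the correspondence by translating between the difference-set language of Proposition~\ref{cor:bent_diffset} and the perfect-coloring language of Proposition~\ref{cl:edge_coloring}, and then to pin down the quotient matrix by a direct counting argument inside $J_2(n,2)$. One direction is already assembled from the discussion preceding the theorem: if $b$ is a bent function with $|\supp(b)|=2^{n-1}+2^{n/2-1}$ and $b(\bar0)=1$, then $B\setminus\{\bar0\}$ (identified with a subset $D$ of the nonzero elements of $\mathbb{Z}_2^n$) is a difference set with the McFarland parameters, hence $\chi_D$ is a perfect $2$-coloring of $\Delta_n$ by Proposition~\ref{avgdop}, hence by Proposition~\ref{cl:edge_coloring} the induced $4$-coloring $f'$ of $\mathcal{E}(\Delta_n)\cong J_2(n,2)$ is perfect. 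Here the four colors are the possible numbers $j\in\{0,1,2,3\}$ of elements of $D$ lying in a given hyperedge (equivalently, in the corresponding $2$-dimensional subspace, noting that such a subspace has exactly three nonzero vectors).

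First I would compute the quotient matrix $S=(s_{jm})_{j,m=0}^{3}$ for this coloring. Fix a hyperedge $e=\{a_1,a_2,a_3\}$ (a $2$-subspace) of color $j$, say with exactly $j$ of the $a_i$ in $D$. A neighboring vertex of $J_2(n,2)$ is a $2$-subspace $e'$ meeting $e$ in a $1$-subspace, i.e.\ sharing exactly one vector $a_i$; for each $i$ there are $2^{n-1}-2$ such $e'$, giving total degree $3(2^{n-1}-2)$ — which matches the row sums of the displayed matrix. To find $s_{jm}$ I would sum, over the three vectors $a_i\in e$, the number of $2$-subspaces through $a_i$ that contain exactly $m$ elements of $D$. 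The key input is that for the difference set $D$, a nonzero vector $a$ lies in exactly $\lambda$ hyperedges fully inside $D$ when $a\in D$ and in exactly $\mu$ hyperedges meeting $D$ in two points when $a\notin D$ (as spelled out just before Proposition~\ref{avgdop}), with $\lambda=\mu=2^{n-2}+2^{n/2-1}$ here after the shift by $\bar0$; from this one extracts, for $a\in D$, the distribution of $\#(e'\cap D)$ over the $2^{n-1}-1$ subspaces through $a$ (splitting the count into the contribution of $a$ itself plus the pair of other vectors), and similarly for $a\notin D$. Assembling these gives each $s_{jm}$ as an explicit combination of $2^{n}$, $2^{n/2}$ and constants; this is the routine but lengthy bookkeeping that I would carry out and check against the stated matrix. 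I also need to handle the distinguished vertex: since $b(\bar0)=1$ but $\bar0$ is discarded when passing to $\Delta_n$, the coloring on $J_2(n,2)$ is exactly the induced one, so no correction is needed there — the condition $b(\bar0)=1$ is what makes the passage $B\mapsto D$ a clean bijection with $|D|=2^{n-1}+2^{n/2-1}-1$ having the parameters used above.

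For the converse I would start from a perfect coloring $g$ of $J_2(n,2)$ with the given quotient matrix $S$ and reconstruct a bent function. The natural guess is $D=\{a\in\mathbb{Z}_2^n\setminus\{\bar0\}: a\text{ lies in a color-}1\text{ hyperedge of the form }\{a,b,b\}\dots\}$ — more precisely, the four colors should force each $2$-subspace to carry a well-defined count $j$ of marked points, and a counting/consistency argument (using $MF=FS$ from Proposition~\ref{s:perfcol_criteria} and the eigenvalue transfer of Corollary~\ref{p:perf_color_properties}) should show this marking is induced by a genuine subset $D\subseteq\mathbb{Z}_2^n\setminus\{\bar0\}$. The cleanest route is via Corollary~\ref{p:perf_color_properties}: the eigenvalues of the displayed $S$ are among the eigenvalues of $J_2(n,2)$, and reading off the $0$-eigenvector (or the appropriate small eigenvalue) of $S$ and pulling it back by $F$ produces the same equation $YY^*h=\bar0$ (where $Y$ is now the incidence matrix of $\Delta_n$) that appeared in the proof of Proposition~\ref{bipartity1}; this forces $Y^*h=\bar0$, i.e.\ the induced vertex-coloring of $\Delta_n$ is constant (in the balanced sense) on each hyperedge, hence is itself a perfect $2$-coloring of $\Delta_n$, hence by Proposition~\ref{avgdop} corresponds to a bent function. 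Finally, matching the specific numerical entries of $S$ to the McFarland parameters fixes $|\supp(b)|=2^{n-1}+2^{n/2-1}$ and, together with the choice of which color plays the role of "three marked points", the normalization $b(\bar0)=1$. The main obstacle I expect is not conceptual but computational: correctly distributing, for a fixed vector $a$, the $2^{n-1}-1$ subspaces through it among the four colors and then summing over the three vectors of a hyperedge of each type $j$, so as to reproduce all sixteen entries of the matrix exactly; a secondary subtlety is verifying in the converse that the four colors of $g$ really are forced to be "counts" of a single underlying point set, rather than something more exotic, which is where the eigenvector argument above does the essential work.
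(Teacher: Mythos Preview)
Your forward direction is essentially the paper's: both obtain the perfect $4$-coloring of $J_2(n,2)\cong\mathcal{E}(\Delta_n)$ from the bent function via the difference-set interpretation and then count neighbors colour by colour. (The paper computes $A_{\alpha\beta}$ directly from the bent-function identity rather than quoting Propositions~\ref{cor:bent_diffset}--\ref{avgdop}, but that is only a matter of presentation, and your colour labels $0,1,2,3$ are the paper's $1,2,3,4$ shifted by the forced value $b(\bar0)=1$.)

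The converse direction, however, has a genuine gap. You propose to recover the vertex $2$-coloring of $\Delta_n$ from the $4$-coloring of $J_2(n,2)$ by a spectral shortcut: take a suitable eigenvector $u$ of $S$, form $Fu$, and argue as in Proposition~\ref{bipartity1} that $YY^*h=\bar0$ forces $Y^*h=\bar0$. This conflates two different constructions. The Grassmann graph here is the \emph{line} graph $\mathcal{E}(\Delta_n)$, with adjacency matrix $Y^*Y-3I$, not the vertex-adjacency multigraph $\mathcal{M}_{12}(\mathcal{D}(\Delta_n))$ with adjacency matrix $YY^*$. Hence $Fu$ is a function on the \emph{hyperedges} of $\Delta_n$, and the relation you get from an eigenvalue $-3$ is $Y^*Y(Fu)=\bar0$, whence $Y(Fu)=\bar0$; this says only that certain weighted sums over the hyperedges through each vertex vanish. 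It does not produce a function $h$ on the vertices of $\Delta_n$, and in particular it does not show that the colour of a $2$-subspace is determined by a genuine subset $D\subseteq\mathbb{Z}_2^n\setminus\{\bar0\}$. A priori there could be $4$-colorings with the right quotient matrix that are not induced by any vertex set; ruling this out is precisely the content of the converse.

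The paper handles this by a direct combinatorial argument that cannot be replaced by soft spectral reasoning. It \emph{defines} $B$ as the set of nonzero vectors lying in some colour-$4$ subspace, sets $M(x)$ to be the number of colour-$4$ subspaces through $x\in B$, and then runs an extremal argument: assuming $M(x)$ is not constantly $\tfrac{A_{11}}{2}-1$, it picks $x$ maximizing $|M(x)-(\tfrac{A_{11}}{2}-1)|$, uses the row-$4$ and column-$4$ entries of $S$ to control averages of $M$ over colour-$4$ subspaces, and manufactures a $2$-subspace whose neighbourhood composition violates a specific entry of $S$. Only after $M(x)$ is shown constant does the column-$4$ of $S$ force that a subspace of colour $i$ contains exactly $i-1$ vectors of $B$, i.e.\ the $4$-coloring is induced by $\chi_B$ on $\Delta_n$, and Proposition~\ref{avgdop} finishes. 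If you want to keep your eigenvector idea, you would still need an independent argument of this type to pass from an edge-function to a vertex-set; as written, that step is missing.
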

\begin{proof}
At first we obtain a  coloring  of $J_2(n,2)$ from a given bent
function. We determine a   vertex color of $J_2(n,2)$ as the number
of values $1$ of the bent function on the corresponding
$2$-dimensional subspace. Since $b(\bar 0)=1$, there exit the $1$st,
$2$nd, $3$rd and $4$th colors. We need to prove that this coloring
is perfect.

 By
the definition of bent function it follows that
\begin{equation}\label{eqavg1}
|\{x\in \{0,1\}^n:b(x)\oplus b(x+y)=0\}|=|\{x\in
\{0,1\}^n:b(x)\oplus b(x+y)=1\}|
\end{equation}
for each $y\neq \bar0$.

Consider two $2$-dimensional subspaces meeting along $\{\bar0, y\}$.
Denote by $A_{\alpha\beta}$ the number of pairs of values
$(\alpha,\beta)$ among all pairs $(b(x),b(x+y))$. Then
$A_{01}+A_{10}=A_{00}+A_{11}$ by (\ref{eqavg1}). Moreover,
$A_{01}+A_{10}+A_{00}+A_{11}=2^n$, $A_{01}=A_{10}$ from the symmetry
and $A_{10}+A_{11}=|\supp(b)|=2^{n-1}+2^{n/2-1}$. Hence, it is easy
to calculate that
 $A_{01}=A_{10}=2^{n-2}$, $A_{11}=2^{n-2}+2^{\frac{n}{2}}$ and
$A_{00}=2^{n-2}-2^{\frac{n}{2}-1}$.

If a color of some subspace is known then we can calculate the
number of subspaces  of each color adjacent to it in $J_2(n,2)$. For
instance, the $4$th color subspace is not adjacent to subspaces of
the first color; it is  adjacent to  $3A_{00}/2$ subspaces of $2$nd
color because each pair of vectors  $(x,x+y)$,
$(b(x),b(x+y))=(0,0)$, is counted twice $(x,x+y)$ and $(x+y,x)$; it
is  adjacent to $(3A_{11}-12)/2$ subspaces of the $4$th color
because the initial subspace contains four vectors of the value $1$
and by fixing every of three nonzero vectors we have $(A_{11}-4)/2$
adjacent subspaces of the $4$th color. Then the coloring of
$J_2(n,2)$ is perfect by the definition. Furthermore, we can
calculate the quotient matrix of the coloring

 $\frac12\begin{pmatrix}
3A_{00}-6& 6A_{01}-6 & 3A_{11} & 0\\
2A_{00} & 4A_{01}+A_{00}-10& 2A_{01}+2A_{11} & A_{11}-2\\
A_{00} & 2A_{01}+2A_{00}-2 & 4A_{10}+A_{11}-6 & 2A_{11} -4\\
0 & 3A_{00} & 6A_{01} & 3A_{11}-12 \\
\end{pmatrix}$.

Suppose that a perfect $4$-coloring of $J_2(n,2)$  with this
quotient matrix is given. Denote $b$ as follows: let $b(\bar0)=1$
and $b(x)=1$ at all points $x\in X_2^n$ belonging to subspaces of
the $4$th color. At the remaining  points $x\in X_2^n$ we put
$b(x)=0$. At the last part of the proof we verify that the resulting
$b$ is a bent function.

Denote by $M(x)$ the number of the $4$th color subspaces containing
a vector $x\in B=\{x\neq \bar 0 :b(x)=1\}$. By the entry ($1$st row,
$4$th column) of the quotient matrix we conclude that subspaces of
the $1$st color  do not contain $x$. Moreover, all vectors of a
subspace belong to  $B$ if and only if it is the $4$th color
subspace   because subspaces of other colors are adjacent to
subspaces of the $1$st color. By the entry ($4$th row, $4$th column)
of the quotient matrix we conclude that the average of $M(x)$ over
an arbitrary  subspace of the $4$th color equals
$\frac{A_{11}}{2}-1$, where $A_{11}=2^{n-2}+2^{\frac{n}{2}}$.
Suppose that there is a such $x\in B$ that $M(x)\neq
\frac{A_{11}}{2}-1$. Let us take $x\in B$ such that the difference
$|M(x)-(\frac{A_{11}}{2}-1)|$ attains the maximum.

Suppose $M(x)> \frac{A_{11}}{2}-1$ and this difference is greater
than the difference $|M(u)-(\frac{A_{11}}{2}-1)|$ for each $u\in B$
such that  $M(u)< \frac{A_{11}}{2}-1$. We will show that there exist
one more vector $v\in B$ such that $M(v)\geq A_{11}/2$. Consider
some subspace containing $x$. For both vectors $z\neq x$ and $x+z$
we have that $M(z)< \frac{A_{11}}{2}-1$ and
$M(x+z)<\frac{A_{11}}{2}-1$ because the average value of $M(y)$ over
every subspace of the $4$th color is $\frac{A_{11}}{2}-1$ but the
difference between $M(x)$ and $\frac{A_{11}}{2}-1$ is strongly
maximal by the assumption. If $M(z)>1$ or $M(x+z)>1$ than there
exist at least two distinct subspaces, and each of them contains a
vector $v\in B$ such that $M(v)\geq A_{11}/2$. If $M(z)=1$ and
$M(x+z)=1$ then $M(x)= \frac{3A_{11}}{2}-6$ and each subspace
containing $x$ is adjacent to $\frac{3A_{11}}{2}-6$ subspaces of the
$4$th color. It contradicts the definition of the quotient matrix.

We have shown  that there exist at least two vectors $x,y\in B$ such
that $M(x)\geq M(y)\geq A_{11}/2$. Consider the subspace $\{\bar 0,
x, y, x+y\}$. We obtain that $x+y\not \in B$ because $x+y\in B$
contradicts the choice of $x$. But there are no subspaces of colors
$1$, $2$ or $3$ adjacent to $M(x)+M(y)-1\geq  A_{11}-1$ subspaces of
the $4$th color.

Next we suppose that the difference $|M(x)-(\frac{A_{11}}{2}-1)|$ is
maximal (not necessarily strictly maximal) for some $x\in B$ such
that $M(x)< \frac{A_{11}}{2}-1$. Consider some $4$th color subspace
containing $x$. In this subspace there exists a vector $z\in B$ such
that $M(z)> \frac{A_{11}}{2}-1$ because the average number of $M(v)$
over every subspace of the $4$th color is the constant. By a similar
way we conclude that every subspaces (among $M(z)$ $4$th color
subspaces) containing $z$ contains vectors $y\in B$ such that $M(y)<
\frac{A_{11}}{2}-1$. We will use three distinct $x,y,y'\in B$
satisfied the last inequality.

Consider the subspace $\Omega=\{\bar 0, x, y, x+y\}$. This subspace
is adjacent to $M(x)+M(y)$ subspaces of color $4$ because the
opposite assumption $x+y \in B$ contradicts maximality of the
difference $|M(x)-(\frac{A_{11}}{2}-1)|$. The color of $\Omega$ does
not equal
 $4$ because of $x+y \not \in B$ and it does not equal  $1$ because
 the
$1$st color subspaces are not adjacent to subspaces of color $4$.
The color of $\Omega$ does not equal $3$ because
$M(x)+M(y)<A_{11}-2$, i.\,e., the $3$rd color of $\Omega$
contradicts the quotient matrix. It leaves the last possibility that
the color of $\Omega$ is equal to $2$. By the quotient matrix,
$\Omega$ is adjacent to $A_{00}=2^{n-2}-2^{\frac{n}{2}-1}$ subspaces
of the $1$st color. These subspaces do not contain $x$ or $y$
because subspaces of colors $1$ and $4$ are not adjacent.

Consider subspace $\Omega'=\{\bar 0, x, y', x+y'\}$. By the same way
we obtain that $x+y'$ belongs to $A_{00}$ subspaces of color $1$.
Then  the subspace  $\{\bar 0, x+y, x+y', y+y'\}$ is adjacent to
$2A_{00}-1$ subspaces of color $1$ at least. By the quotient matrix
such subspace does not exist. By this contradiction we proved that
 $M(x)=\frac{A_{11}}{2}-1$ for each $x\in B$.

By the $4$th column of the quotient matrix we obtain that every
subspace of color $i$ contains $i-1$ vectors of $B$. Consequently,
the coloring of $J_2(n,2)$ induces a perfect $2$-coloring of
$\Delta_n$. It is easy to see that parameters of the coloring of
$\Delta_n$ correspond to parameters of a coloring induced by a bent
function. Therefore, the coloring of $J_2(n,2)$ with the quotient
matrix determined above  induces a bent function by Proposition
\ref{avgdop}.
\end{proof}

It is not difficult to verify that, combining pairwise even and odd
colors in the above $4$-coloring of $J_2(n,2)$, we obtain a perfect
coloring with quotient matrix

 $\begin{pmatrix}
3\cdot 2^{n-2}-3& 3\cdot 2^{n-2}-3 \\
3\cdot 2^{n-2} & 3\cdot 2^{n-2}-6\\
\end{pmatrix}$.

But these perfect $2$-colorings can not one-to-one correspond to
bent functions. When we add an affine function to a bent function,
we will get a new bent function but the $2$-coloring will remain the
same.

\end{document}